\newtheorem{theorem}{Theorem}[section]
\newtheorem{corollary}{Corollary}[section]
\theoremstyle{remark} \newtheorem{remark}{Remark}[section]
\newcommand{\MN}{\mathcal{M}_{n}}
\newcommand{\TMN}{\mathop{M}_{n}}
\newcommand{\T}{\mathop{\textrm{T}}\nolimits}
\newcommand{\dint}{\mathop{\mathrm{d}}\nolimits}
\newcommand{\Dom}{\mathop{\rm{Dom}}\nolimits}
\newcommand{\di}{\displaystyle}
\newcommand{\GL}{\mathop{\rm{GL}}\nolimits}
\newcommand{\Mat}{\mathop{\textrm{Mat}}\nolimits}
\newcommand{\Lin}{\mathop{\textrm{Lin}}\nolimits}
\newcommand{\Tr}{\mathop{\rm{Tr}}\nolimits}
\newcommand{\Scal}{\mathop{\rm{Scal}}\nolimits}
\newcommand{\Ric}{\mathop{\rm{Ric}}\nolimits}
\newcommand{\un}[1]{\underline{#1}}
\newcommand{\ce}{\mathop{\textrm{e}}\nolimits}
\newcommand{\F}{\mathop{\textrm{F}}\nolimits}
\newcommand{\EN}{\mathop{\textrm{T}}\nolimits}
\newcommand{\RE}{\mathop{\textrm{R}}\nolimits}
\newcommand{\KM}{\mathop{\textrm{KM}}\nolimits}
\newcommand{\LA}{\mathop{\textrm{La}}\nolimits}
\newcommand{\SI}{\mathop{\textrm{CO}}\nolimits}
\newcommand{\LMR}{\mathop{\textrm{LMR}}\nolimits}
\newcommand{\Aut}{\mathop{\textrm{Aut}}\nolimits}
\newcommand{\SO}{\mathop{\textrm{SO}}\nolimits}
\newcommand{\SL}{\mathop{\textrm{SL}}\nolimits}
\begin{document}

\title{
  On the geometry of generalized Gaussian distributions
  \thanks{keywords: Gaussian distribution, differential geometry;
          MSC: 94A17, 53B21}}
\author{Attila Andai\thanks{andaia@math.bme.hu}\\
  RIKEN, BSI, Amari Research Unit \\
  2--1, Hirosawa, Wako, Saitama 351-0198, Japan.}
\date{May 11, 2007}

\maketitle

\begin{abstract}
In this paper we consider the space of those probability distributions which maximize
  the $q$-R\'enyi entropy.
These distributions have the same parameter space for every $q$, and in the $q=1$ case
  these are the normal distributions.
Some methods to endow this parameter space with Riemannian metric is presented: the second
  derivative of the $q$-R\'enyi entropy, Tsallis-entropy and the relative entropy give rise
  to a Riemannian metric, the Fisher-information matrix is a natural Riemannian metric, and
  there are some geometrically motivated metrics which were studied by Siegel, Calvo and Oller,
   Lovri\'c, Min-Oo and Ruh.
These metrics are different therefore our differential geometrical calculations based
  on a unified metric, which covers all the above mentioned metrics among others.
We also compute the geometrical properties of this metric, the equation of the geodesic line
  with some special solutions, the Riemann and Ricci curvature tensors and scalar curvature.
Using the correspondence between the volume of the geodesic ball and the scalar curvature
  we show how the parameter $q$ modulates the statistical distinguishability of close points.
We show that some frequently used metric in quantum information geometry can be easily recovered
  from classical metrics.
\end{abstract}

\section{Introduction}

In theoretical statistics and in applications the distance functions between probability
  distributions play an important role.
The construction of a proper distance function has been considered by several authors.
But even the same statistical model with different mathematical frameworks can lead to
  different distance functions.
To narrow the family of potential distance functions we consider those which are natural from
  differential geometrical point of view.

Historically the pioneering work of Mahalanobis \cite{Mah} was generalized by Rao \cite{Rao},
  who first suggested the idea of considering the Fisher information \cite{Fis} as a Riemannian
  metric on the space of probability distributions.
Cencov \cite{Cen} was the first to study monotone metrics on statistical manifolds.
He proved that, up to a normalization, there exists a unique monotone metric, the Fisher
  information.
Amari \cite{Ama1} and Amari and Nagaoka \cite{AmaNag} provide modern account of the general
  differential geometry that arises from the Fisher information metric.
The Fisher metric was studied further by Akin \cite{Aki}, James \cite{Jam},
  Burbea \cite{Bur}, Mitchell \cite{Mit}, Atkinson and Mitchell \cite{AtkMit},
  Skovgaard \cite{Sko}, Oller \cite{Oll}, Oller and Cuadrasa \cite{OllCua},
  Oller and Corcuera \cite{OllCor} among other researchers.
The combination of differential geometrical and statistical studies helped to find the
  statistical interpretation of geometrical quantities.
For example the geodesic distance between probability distributions,
  which is usually known as Rao distance is a natural distance function between probability distributions;
  the statistical meaning of the so-called e-curvature was first clarified by Efron \cite{Efr};
  the normalized volume measure of the manifold is called Jeffreys' prior \cite{Jef} within the field of
  Bayesian statistics.

In this paper we consider the space of those probability distributions which maximize the
  $q$-R\'enyi entropy.
These distributions have the same parameter space for every $q$, and in the $q=1$ case these
  are the normal distributions.
The first results about the geometrical properties of these spaces are due to
  Amari \cite{Ama1,Ama2}.
He considered the Fisher information metric on these manifolds and computed some geometrical
  invariants.
Some methods to endow the parameter space with Riemannian metric is presented: the second
  derivative of the $q$-R\'enyi entropy \cite{Ren}, Tsallis-entropy \cite{Tsa1} and the
  relative entropy give rise to a Riemannian metric, the Fisher-information matrix is a
  natural Riemannian metric, and there are some geometrically motivated metrics which were
  studied by Siegel \cite{Sie}, Calvo and Oller \cite{CalOll} and Lovri\'c,
  Min-Oo and Ruh \cite{LovMinRuh}.
These metrics are different therefore our differential geometrical calculations based
  on a unified metric, which covers all the above mentioned metrics among others.
We also compute the geometrical properties of this metric, the equation of the geodesic line
  with some special solutions, the Riemann and Ricci curvature tensors and scalar curvature.
Using the correspondence between the volume of the geodesic ball and the scalar curvature
  we show how the parameter $q$ modulates the statistical distinguishability of close points.
We show that some frequently used metric in quantum information geometry can be easily recovered
  from classical metrics.

\section{$q$-R\'enyi entropy maximizing distributions}

The normal distributions can be introduced as a result of the maximum entropy principle.
Consider the family of density functions which are continuous and supported on the real line
  with given expectation value $\mu\in\mathbb{R}$ and variance $\sigma^{2}\in\mathbb{R}$.
Introducing the Lagrange multipliers $a,b,c$ we have the following functional on the family of
  probability distributions
\begin{align*}
S(p)=&-\int p(x)\log p(x)\ \dint x-a\left(\int p(x)\ \dint x-1 \right)\\
 &-b\left(\int p(x)x\ \dint x-\mu\right)-c\left(\int p(x)(x-\mu)^{2}\ \dint x-\sigma^2 \right).
\end{align*}
The variation of the functional is
\begin{equation*}
\delta S=\int \left(-\log p(x)-1-a-bx-c(x-\mu)^{2} \right)\delta p(x)\ \dint x.
\end{equation*}
The functional has extremal point at $p$ if its variation is zero.
One can show that the entropy functional has local maximum at the point
\begin{equation*}
p(x)=\exp\left(-a-bx-c(x-\mu)^2\right)
\end{equation*}
  for appropriate parameters $a,b,c\in\mathbb{R}$.

The family of one dimensional normal distributions $S_{1}$ can be parameterized by the
  expectation value $u\in\mathbb{R}$ and the parameter $d\in\mathbb{R}^{+}$ as
\begin{equation*}
f(d,u,x)=\frac{\sqrt{d}}{\sqrt{2\pi}}\ce^{-\frac{1}{2}d(x-u)^{2}}.
\end{equation*}
This means that $S_{1}$ can be identified with a $2$ dimensional space
  $\Xi_{1}=\mathbb{R}^{+}\times\mathbb{R}$.
The statistical properties of the distributions lead us to define Riemannian metric on the
  space $\Xi_{1}$.

In general, the family of $n$ dimensional normal distributions $S_{n}$ can be parameterized by
  the expectation vector $\un{u}\in\mathbb{R}^{n}$ and the inverse of the covariance matrix $D$.
Let us denote the set of real symmetric strictly positive $n\times n$ matrices by $\MN$.
Then we can identify the sets $S_{n}$ and $\Xi_{n}=\MN\times\mathbb{R}^{n}$ using the following
  one-to-one map
\begin{equation*}
\Xi_{n}\to S_{n}\qquad (D,\un{u})\mapsto f(D,\un{u},\cdot),
\end{equation*}
where
\begin{equation*}
f(D,\un{u},\cdot):\mathbb{R}^{n}\to\mathbb{R}\qquad
  \un{x}\mapsto\frac{\sqrt{\det D}}{\sqrt{(2\pi)^{n}}}
  \exp\left(-\frac{1}{2}\langle\un{x}-\un{u},D(\un{x}-\un{u})\rangle\right)
\end{equation*}
Normal distributions with zero expectation will said to be to special normal distributions.
The parameter space of the $n$ dimensional special normal distribution is $\Xi^{(s)}_{n}=\MN$.

One can generalize the above mentioned procedure to extend the notion of Gaussian distributions
  using the $q$-R\'enyi entropy \cite{Ren}.
Let us fix a parameter $q\in\mathbb{R}^{+}\setminus\left\lbrace 1\right\rbrace$ and consider a
  density function $p$.
The $q$-R\'enyi entropy of the distribution $p$ is
\begin{equation*}
S_{q}(p)=\frac{1}{1-q}\log\int_{\mathbb{R}} p(x)^{q}\ \dint x
\end{equation*}
  if the integral exists.

The $q$-R\'enyi entropy maximizing distribution is the following.
For a given $n\in\mathbb{N}\setminus\left\lbrace 0\right\rbrace$ the parameter space is
  $\Xi_{n}=\MN\times\mathbb{R}^{n}$.
For a parameter $(D,\un{u})\in\Xi$ define the set
\begin{equation*}
\Dom(p,D,\un{u})=\left\lbrace
\begin{array}{cll}
\mathbb{R}^{n},& \mbox{if} &\di p\in\left\rbrack \frac{n}{n+2},1\right\lbrack;\\
\di  \left\lbrace \un{x}\in\mathbb{R}^{n}\ \vert\
  1+\frac{1-p}{2p-n(1-p)}\langle\un{x}-\un{u},D(\un{x}-\un{u})\rangle\geq 0 \right\rbrace, &
    \mbox{if} & p>1;
\end{array}\right.
\end{equation*}
  and define the density function as
\begin{equation*}
f_{p}(D,\un{u},\un{x})=\left\lbrace
\begin{array}{cll}
  \di A_{n,p}\sqrt{\det{D}}
  \left(1+\frac{1-p}{2p-n(1-p)}\langle\un{x}-\un{u},D(\un{x}-\un{u})\rangle\right)^{\frac{1}{p-1}},
  &\mbox{if} &\di \un{x}\in\Dom(p,D,\un{u});\\
  0,&\mbox{if} &\di \un{x}\notin\Dom(p,D,\un{u}).
  \end{array}\right.
\end{equation*}

The normalization constant of the generalized $p$-Gaussian distribution is
\begin{equation*}
\di A_{n,p}=\left\lbrace\begin{array}{lll}
\di\left(\frac{1-p}{2p-n(1-p)}\right)^{\frac{n}{2}}
  \frac{\Gamma\left(\frac{1}{1-p}\right)}{\pi^{\frac{n}{2}}
  \Gamma\left(\frac{1}{1-p}-\frac{n}{2}\right)},
  &\di\mbox{if}&\di p\in\left\rbrack \frac{n}{n+2},1\right\lbrack;\\
\di\left(\frac{p-1}{2p-n(1-p)}\right)^{\frac{n}{2}}
  \frac{\Gamma\left(\frac{p}{p-1}+\frac{n}{2}\right)}{\pi^{\frac{n}{2}}
  \Gamma\left(\frac{p}{p-1}\right)},
  &\di\mbox{if}&\di p>1.
\end{array}\right.
\end{equation*}

For a given parameters $n\in\mathbb{N}\setminus\left\lbrace 0\right\rbrace$ and $p>\frac{n}{n+2}$ we call
\begin{equation*}
M_{p}=\left\lbrace f_{p}(D,\un{u},\cdot)\ \vert\ (D,\un{u})\in\Xi_{n} \right\rbrace
\end{equation*}
  the family of $p$-generalized Gaussian distributions.
This forms a manifold parameterized by $(D,\un{u})$.
This is an $\alpha$-family of probability distributions, where $\alpha=2p-1$ and is $\alpha$-flat
  (see Amari and Nagaoka \cite{AmaNag}).
The present paper studies the geometrical structures of $M_{p}$.

If we consider the limit $q\to 1$ then the $q$-R\'enyi entropy tends to the entropy.
From this point on we will allow the $p=1$ case, and we will consider it as the usual Gaussian
  distribution, and in the $p=1$ case we sometimes omit the index $p$.
The set
\begin{equation*}
\mathcal{N}=\left\lbrace (n,p)\in\mathbb{N}\times\mathbb{R}\ \vert\
 n>0,\ p>\frac{n}{n+2}\right\rbrace
\end{equation*}
  can be considered as the label set of the $p$-Gaussian distributions, and for every pair
  $(n,p)\in\mathcal{N}$ the parameter space of the $n$-dimensional $p$-Gaussian distributions
  is $\Xi_{n}=\MN\times\mathbb{R}^{n}$ and the parameter space of the special Gaussian
  distributions is $\Xi_{n}^{(s)}=\MN$.

We present a Theorem which shows the maximum $q$-R\'enyi entropy property of the $p$-Gaussian
  distributions in the $q=p$ case.
The maximum R\'enyi entropy problem was solved by Moriguti in the scalar case \cite{Mor}.
The distribution function was remarked by Zografos \cite{Zog} in the multivariate case, but not
  connected to the R\'enyi entropy.
The problem was solved first by Kapur \cite{Kap} in the multivariate case,
  Johnson and Vignat also solved the problem in the multivariate case \cite{JohVig} using the
  result of Lutwak, Yang and Zhang \cite{LutYanZha}.
Costa, Hero and Vignat \cite{CosHerVig} established properties of multivariate distributions
  maximizing R\'enyi-entropy, under a covariance constraint.

\begin{theorem}
For any probability density $g:\mathbb{R}^{n}\to\mathbb{R}^{+}$ with fixed covariance matrix
  $K$, expectation $\un{u}\in\mathbb{R}^{n}$ and parameter $q>\frac{n}{n+2}$,
\begin{equation*}
S_{q}(g)\leq S_{q}(f_{q}(K^{-1},\un{u},\cdot)),
\end{equation*}
  with equality if and only if $g=f_{q}(K^{-1},\un{u},\cdot)$ almost everywhere.
\end{theorem}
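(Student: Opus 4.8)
The plan is to reduce the entropy inequality to a comparison of the $q$-th moments $\int g^{q}$ and $\int h^{q}$, where I abbreviate $h=f_{q}(K^{-1},\un{u},\cdot)$. Since $S_{q}(g)=\frac{1}{1-q}\log\int g^{q}\,\dint x$, the assertion $S_{q}(g)\le S_{q}(h)$ is equivalent to $\int g^{q}\,\dint x\ge\int h^{q}\,\dint x$ when $q>1$ (because $\frac{1}{1-q}<0$) and to $\int g^{q}\,\dint x\le\int h^{q}\,\dint x$ when $\frac{n}{n+2}<q<1$; the borderline $q=1$ is the classical Gaussian maximum-entropy statement, recovered in the limit from the nonnegativity of the Kullback--Leibler divergence. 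The structural fact that drives everything is that, by the very definition of $f_{q}$, the power $h^{q-1}$ is an \emph{affine} function of the quadratic form $Q(\un{x})=\langle\un{x}-\un{u},K^{-1}(\un{x}-\un{u})\rangle$ on the support of $h$: explicitly $h^{q-1}=c_{0}\bigl(1+cQ\bigr)$ with $c=\frac{1-q}{2q-n(1-q)}$ and $c_{0}=A_{n,q}^{\,q-1}(\det K^{-1})^{(q-1)/2}>0$.

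The next step is a matching identity. Because the constant $A_{n,q}$ and the coefficient $c$ are chosen precisely so that $h$ has mean $\un{u}$ and covariance $K$, one computes from the definition $\int h\,\dint x=1$ and $\int h\,Q\,\dint x=\Tr(K^{-1}K)=n$, whence $\int h^{q}\,\dint x=\int h\cdot h^{q-1}\,\dint x=c_{0}(1+cn)$. The same two constraints are imposed on $g$, so $\int g\,\dint x=1$ and $\int g\,Q\,\dint x=\Tr(K^{-1}K)=n$ as well. For $\frac{n}{n+2}<q<1$ the coefficient $c$ is positive, $h$ is strictly positive on all of $\mathbb{R}^{n}$, and therefore $\int g\,h^{q-1}\,\dint x=c_{0}(1+cn)=\int h^{q}\,\dint x$ exactly. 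For $q>1$ one has $c<0$ and $h$ is supported on the compact set $R$ where $1+cQ\ge0$; here $\int g\,h^{q-1}\,\dint x=\int_{R}g\,c_{0}(1+cQ)\,\dint x\ge\int_{\mathbb{R}^{n}}g\,c_{0}(1+cQ)\,\dint x=c_{0}(1+cn)=\int h^{q}\,\dint x$, the inequality coming from the fact that $c_{0}(1+cQ)$ is negative off $R$ while $g\ge0$.

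The inequality itself now follows from the tangent-line estimate for the power function. For $q>1$ the map $t\mapsto t^{q}$ is convex, so $g^{q}\ge h^{q}+qh^{q-1}(g-h)$ pointwise (the estimate also holds off $R$, where its right-hand side is $0\le g^{q}$); integrating and inserting $\int h^{q-1}(g-h)\,\dint x\ge0$ from the previous step gives $\int g^{q}\,\dint x\ge\int h^{q}\,\dint x$. For $\frac{n}{n+2}<q<1$ the map is concave, so $g^{q}\le h^{q}+qh^{q-1}(g-h)$, and the exact identity $\int h^{q-1}(g-h)\,\dint x=0$ yields $\int g^{q}\,\dint x\le\int h^{q}\,\dint x$. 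In both regimes the sign bookkeeping of the first paragraph converts this into $S_{q}(g)\le S_{q}(h)$.

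Finally, equality is analysed through the strictness of the tangent-line bound: since $t\mapsto t^{q}$ is \emph{strictly} convex ($q>1$) or strictly concave ($\frac{n}{n+2}<q<1$), equality in the pointwise estimate forces $g=h$ at almost every point where $h>0$; in the compact-support regime one must in addition note that equality in $\int g\,h^{q-1}\,\dint x\ge c_{0}(1+cn)$ forces $g=0$ almost everywhere off $R$, so that again $g=h$ a.e. I expect the main obstacle to be exactly this support bookkeeping when $q>1$: one has to keep track of the fact that $h^{q-1}$ vanishes outside $R$ whereas the affine expression $c_{0}(1+cQ)$ does not, and check that both the tangent-line inequality and the moment identity survive the truncation with the correct inequality sign. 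The verification that the chosen $c$ and $A_{n,q}$ genuinely make the covariance of $h$ equal to $K$ is a routine but essential computation underlying the matching identity.
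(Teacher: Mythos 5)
Your argument is correct, but there is nothing internal to compare it against: the paper does not prove this theorem at all. It is stated as a known result, with the proof delegated to the cited literature (Moriguti for the scalar case, Kapur and Johnson--Vignat for the multivariate case via Lutwak--Yang--Zhang, Costa--Hero--Vignat under a covariance constraint). What you have written is a self-contained proof in essentially the same spirit as those references: the tangent-line (Bregman/Tsallis-divergence) estimate $t^{q}\gtrless s^{q}+qs^{q-1}(t-s)$ combined with matching of second moments. Your sign bookkeeping is right in both regimes; the truncation issue for $q>1$ (where $h^{q-1}$ vanishes off the ellipsoid $R$ while the affine expression $c_{0}(1+cQ)$ is strictly negative there) is resolved with the correct inequality direction; and the equality analysis is sound --- in fact for $q>1$ the pointwise equality case alone already forces $g=0$ a.e.\ off $R$, since there the tangent-line bound reads $g^{q}\geq 0$, so your appeal to the moment inequality is a second, equally valid route. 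The two points you flag but do not carry out do check out. First, the ``routine but essential'' covariance verification is genuinely where the paper's coefficient $\frac{1-q}{2q-n(1-q)}$ enters: diagonalizing $D=K^{-1}$ and passing to spherical coordinates gives
\begin{equation*}
\int h\,Q\ \dint\un{x}=\frac{1}{c}\cdot\frac{n(1-q)}{2q-n(1-q)}=n ,
\end{equation*}
so the covariance of $f_{q}(K^{-1},\un{u},\cdot)$ is exactly $K$ (the analogous Beta-integral computation works for $q>1$). Second, for $\frac{n}{n+2}<q<1$ no separate integrability hypothesis on $\int g^{q}$ is needed: the integrated tangent-line bound has finite right-hand side (both $\int h^{q}$ and $\int gh^{q-1}=c_{0}(1+cn)$ are finite because $g$ has finite covariance), so $\int g^{q}<\infty$ comes out automatically; the threshold $q>\frac{n}{n+2}$ is precisely what makes the second moment of $h$ finite, which your matching identity requires.
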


Important to note, that the $p$-Gaussian distributions maximize not only the $q$-R\'enyi entropy,
  but the Tsallis entropy too, defined by equation (\ref{eq:Tsallis entropy}) and
  minimize $\alpha$-relative entropy (defined in the next Section)
  between the uniform distribution and an arbitrary one.

We call the family of probability distributions $f_{p}(D,\un{u},\cdot)$ ($(n,p)\in\mathcal{N}$,
  $(D,\un{u})\in\Xi_{n}$) extended Gaussian distributions.

\section{Riemannian metrics on the space of extended Gaussian distributions}

The parameter spaces $\Xi_{n}$ and $\Xi^{(s)}_{n}$ have a natural manifold structure.
Let us denote the space of real symmetric $n\times n$ matrices by $\TMN$.
Then at the point $(D,\un{u})\in\Xi_{n}$ the tangent space $\T_{(D,\un{u})}\Xi_{n}$ can be
  identified by $\T_{n}=\TMN\times\mathbb{R}^{n}$, since one can consider the tangent vector
  $(X,\un{x})$ as a derivation defined for any smooth function $h:\Xi\to\mathbb{R}$ as
\begin{equation}
\label{eq:deriv_def}
\frac{\partial h(D,\un{u})}{\partial (X,\un{x})}=
  \left.\frac{\dint}{\dint t}h(D+tX,\un{u}+t\un{x})\right|_{t=0}.
\end{equation}
In this setting a map
\begin{equation*}
g:\Xi_{n}\times\T_{n}\times\T_{n}\to\mathbb{C}\qquad
 ((D,\un{u}),(X,\un{x}),(Y,\un{y}))\mapsto g_{D,\un{u}}((X,\un{x}),(Y,\un{y}))
\end{equation*}
  will be called a Riemannian metric if the following conditions hold.
For all $(D,\un{u})\in\Xi_{n}$ the map
\begin{equation*}
g_{D,\un{u}}:\T_{n}\times\T_{n}\to\mathbb{C}\qquad
  ((X,\un{x}),(Y,\un{y}))\mapsto g_{D,\un{u}}((X,\un{x}),(Y,\un{y}))
\end{equation*}
  is a scalar product and for all $(X,\un{x})\in\T_{n}$ the map
\begin{equation*}
g((X,\un{x}),(X,\un{x})):\Xi_{n}\to\mathbb{C}\qquad
  (D,\un{u})\mapsto g_{D,\un{u}}((X,\un{x}),(X,\un{x}))
\end{equation*}
  is smooth.

Now we present some ideas how the space $\Xi_{n}$ can be endowed with Riemannian metric.
For example the ($q$-R\'enyi) entropy can generate a Riemannian metric: because the following
  Theorem shows that $q$-R\'enyi entropy is a convex functional, so its second derivative is
  a strictly positive symmetric linear map, and therefore it can define a Riemannian metric.

\begin{theorem}
For every pair $(n,p)\in\mathcal{N}$ and $(D,\un{x})\in\Xi_{n}$ the
  $q$-R\'enyi entropy ($q\in\mathbb{R}^{+}$) of the distribution $f_{p}(D,\un{u},\cdot)$ is
\begin{align}
&\di\mbox{if}\ p,q>1:\nonumber\\
&\di\label{eq:SR1_p(f_q)} S_{q}(f_{p}(D,\un{u},\cdot))=\frac{n}{2}\log\frac{\pi(2p-n(1-p))}{p-1}
 +\frac{1}{1-q}\log\left\lbrack
 \frac{\Gamma\left(\frac{p}{p-1}+\frac{n}{2} \right)^{q}\Gamma\left(\frac{q}{p-1}+1 \right)}
 {\Gamma\left(\frac{p}{p-1}\right)^{q}\Gamma\left(\frac{q}{p-1}+1+\frac{n}{2}\right)}
  \right\rbrack-\frac{1}{2}\log\det D\\
&\di\mbox{if}\ p<1,\ q>\frac{n(1-p)}{2}:\nonumber\\
&\di\label{eq:SR2_p(f_q)} S_{q}(f_{p}(D,\un{u},\cdot))=\frac{n}{2}\log\frac{\pi(2p-n(1-p))}{1-p}
 +\frac{1}{1-q}\log\left\lbrack
 \frac{\Gamma\left(\frac{1}{1-p}\right)^{q}\Gamma\left(\frac{q}{1-p}-\frac{n}{2} \right)}
 {\Gamma\left(\frac{1}{1-p}-\frac{n}{2}\right)^{q}\Gamma\left(\frac{q}{1-p}\right)}
  \right\rbrack-\frac{1}{2}\log\det D.
\end{align}
\end{theorem}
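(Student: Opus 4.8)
The plan is to reduce the statement to the evaluation of the single integral $\int_{\mathbb{R}^{n}} f_{p}(D,\un{u},\un{x})^{q}\ \dint\un{x}$, since by definition $S_{q}(f_{p}(D,\un{u},\cdot))=\frac{1}{1-q}\log\int f_{p}^{q}$. As the density depends on $\un{x}$ only through $\un{x}-\un{u}$, the substitution $\un{y}=\un{x}-\un{u}$ immediately removes the expectation vector; this explains why neither (\ref{eq:SR1_p(f_q)}) nor (\ref{eq:SR2_p(f_q)}) contains $\un{u}$, and reduces the problem to a centred integral.

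Next I would eliminate the matrix $D$ from the quadratic form. Writing $c=\frac{1-p}{2p-n(1-p)}$ for the coefficient appearing in the density and substituting $\un{y}=D^{-1/2}\un{z}$, one gets $\langle\un{y},D\un{y}\rangle=|\un{z}|^{2}$ and $\dint\un{y}=(\det D)^{-1/2}\dint\un{z}$. After this change $f_{p}^{q}=A_{n,p}^{q}(\det D)^{q/2}(1+c|\un{z}|^{2})^{\frac{q}{p-1}}$, so the determinant collects into the single factor $(\det D)^{(q-1)/2}$; applying $\frac{1}{1-q}\log$ to it produces exactly the $-\frac{1}{2}\log\det D$ term of both formulas. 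Passing to spherical coordinates reduces the remainder to $\frac{2\pi^{n/2}}{\Gamma(n/2)}\int_{0}^{R}(1+cr^{2})^{\frac{q}{p-1}}r^{n-1}\ \dint r$, where the radius is $R=\infty$ when $p<1$ and $R=|c|^{-1/2}$ when $p>1$, the latter being precisely the support condition encoded in $\Dom(p,D,\un{u})$.

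The core of the argument is this one-dimensional integral, evaluated via $t=|c|r^{2}$ and the Beta integral. For $p<1$ the coefficient $c$ is positive and the integral runs over the whole half-line, giving $\int_{0}^{\infty}(1+t)^{-s}t^{a-1}\ \dint t=\frac{\Gamma(a)\Gamma(s-a)}{\Gamma(s)}$ with $a=\frac{n}{2}$ and $s=\frac{q}{1-p}$; here the hypothesis $q>\frac{n(1-p)}{2}$ is exactly the condition $s>a$ that guarantees convergence. For $p>1$ the coefficient is negative and the integrand is supported on the bounded interval $[0,|c|^{-1/2}]$, giving $\int_{0}^{1}(1-t)^{b}t^{a-1}\ \dint t=\frac{\Gamma(a)\Gamma(b+1)}{\Gamma(a+b+1)}$ with $a=\frac{n}{2}$ and $b=\frac{q}{p-1}$. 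This sign split is what sends $\frac{q}{1-p}-\frac{n}{2}$ versus $\frac{q}{p-1}+1$ into the numerator Gamma factors of the two formulas.

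Finally I would substitute the explicit two-case value of $A_{n,p}$. Its power $\pi^{-n/2}$ and its power of $c$ (respectively $|c|$) combine with the $\pi^{n/2}$ from the sphere volume and the $|c|^{-n/2}$ from the Beta substitution so that only $(q-1)$-th powers survive: one is left with $c^{n(q-1)/2}\pi^{n(1-q)/2}(\det D)^{(q-1)/2}$ times a ratio of Gamma functions. Taking $\frac{1}{1-q}\log$ turns the first factor into the leading geometric term $\frac{n}{2}\log\frac{\pi(2p-n(1-p))}{|1-p|}$ and leaves the Gamma ratio inside $\frac{1}{1-q}\log[\cdots]$, matching (\ref{eq:SR1_p(f_q)}) and (\ref{eq:SR2_p(f_q)}) respectively. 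I expect the main obstacle to be not any single step but the bookkeeping: one must keep the convergence constraints and the positivity of every Gamma argument in view, and verify that the scattered powers of $c$, $\pi$ and $A_{n,p}$ telescope to precisely the claimed closed forms in both regimes.
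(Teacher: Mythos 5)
Your proposal is correct and follows essentially the same route as the paper: both reduce the claim to the integral $\int f_{p}^{q}$, normalize the quadratic form by a linear change of variables (the paper diagonalizes $D$ and rescales by $\sqrt{a\lambda_{i}}$, you use $D^{-1/2}$, which is equivalent), pass to spherical coordinates, and evaluate the radial integral as a Beta/Gamma ratio before recombining with $A_{n,p}$. Your explicit tracking of where the convergence condition $q>\frac{n(1-p)}{2}$ enters is a small clarity gain over the paper's proof, but it is not a different argument.
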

\begin{proof}
First we compute the integral
\begin{equation}
\label{eq:intf_p^q}
I=\int_{\Dom(p,D,\un{u})}f_{p}(D,\un{u},\un{x})^{q}\ \dint\un{x}.
\end{equation}
Choose our new coordinate system in $\mathbb{R}^{n}$ parallel to the eigenvectors of $D$.
In this coordinate system $D$ is diagonal, with entries $(\lambda_{i})_{i=1,\dots,n}$.
If $p>1$ then with the variables $a=\frac{p-1}{2p-n(p-1)}$ and
  $y_{i}=\sqrt{a\lambda_{i}}(x_{i}-u_{i})$ the integral is
\begin{equation*}
I=\frac{A_{n,p}^{q}(\det D)^{\frac{q-1}{2}}}{a^{\frac{n}{2}}}
  \int_{B_{n}(1)}\left(1-\sum_{k=1}^{n}y_{k}^{2} \right)^{\frac{q}{p-1}} \ \dint\un{y}.
\end{equation*}
In spherical coordinates this equation is
\begin{equation*}
I=\frac{A_{n,p}^{q}(\det D)^{\frac{q-1}{2}}}{a^{\frac{n}{2}}}
  \int_{0}^{1}\left(1-r^{2} \right)^{\frac{q}{p-1}}r^{n-1}F_{n} \ \dint r,
\end{equation*}
  where $F_{n}$ is the surface of the $n$ dimensional sphere with unit radius
\begin{equation*}
F_{n}=\frac{n\pi^{\frac{n}{2}}}{\Gamma\left(\frac{n}{2}+1 \right)}.
\end{equation*}
Evaluating the integral
\begin{equation*}
\int_{0}^{1}\left(1-r^{2} \right)^{\frac{q}{p-1}}r^{n-1}\ \dint r=
  \frac{\Gamma\left(\frac{q}{p-1}+1\right)\Gamma\left(\frac{n}{2}\right)}
  {2\Gamma\left(\frac{q}{p-1}+1+\frac{n}{2} \right)}
\end{equation*}
we have
\begin{equation}
\label{eq:intf_p^q1}
I=\left(\frac{a}{\pi}\right)^{\frac{n(q-1)}{2}}
  \left(\frac{\Gamma\left(\frac{p}{p-1}+\frac{n}{2} \right)}
             {\Gamma\left(\frac{p}{p-1}\right)}\right)^{q}
  \frac{\Gamma\left(\frac{q}{p-1}+1 \right)}{\Gamma\left(\frac{q}{p-1}+1+\frac{n}{2}\right)}
  (\det D)^{\frac{q-1}{2}}
\end{equation}
  and this verifies the Equation (\ref{eq:SR1_p(f_q)}).
If $p<1$ then the integral (\ref{eq:intf_p^q}) is
\begin{equation*}
I=\frac{A_{n,p}^{q}(\det D)^{\frac{q-1}{2}}}{a^{\frac{n}{2}}}
  \int_{0}^{1}\left(1+r^{2} \right)^{\frac{q}{p-1}}r^{n-1}F_{n} \ \dint r
\end{equation*}
  after the substitutions $a=\frac{1-p}{2p-n(p-1)}$ and $y_{i}=\sqrt{a\lambda_{i}}(x_{i}-u_{i})$.
Evaluating the integral we get
\begin{equation}
\label{eq:intf_p^q2}
I=\left(\frac{a}{\pi}\right)^{\frac{n(q-1)}{2}}
  \left(\frac{\Gamma\left(\frac{1}{1-p}\right)}
             {\Gamma\left(\frac{1}{1-p}-\frac{n}{2}\right)}\right)^{q}
  \frac{\Gamma\left(\frac{q}{1-p}-\frac{n}{2}\right)}{\Gamma\left(\frac{q}{1-p}\right)}
  (\det D)^{\frac{q-1}{2}}
\end{equation}
  which leads to Equation (\ref{eq:SR2_p(f_q)}).
\end{proof}

Since the $q$-R\'enyi entropy is independent of the expectation vector $\un{u}$ the entropy cannot
  generate a Riemannian metric on the whole space $\Xi$ just on $\Xi_{n}^{(s)}$.
The $q$-R\'enyi entropy can be written in the form of
\begin{equation}
\label{eq:q-Renyi altalanos}
S_{q}(f_{p}(D,\un{u},\cdot)=C_{n,p,q}-\frac{1}{2}\log\det D,
\end{equation}
so the quadratic form generated by the functional $S_{q}$ on the space of $p$-Gaussian
  distribution for every point $D\in\Xi_{n}^{(s)}$ and tangent vectors $X,Y\in\T_{n}$ being
\begin{equation*}
g^{(\RE)}_{D}(X,Y)=\left.\frac{\partial^{2}}{\partial s\partial t}
   S_{p}(f_{q}(D+tX+sY,\un{0},\cdot)\right\vert_{t=s=0}
  =-\frac{1}{2}\left.\frac{\partial^{2}}{\partial s\partial t}
  \bigl(\log\det(D+tX+sY)\bigr)\right\vert_{t=s=0}
\end{equation*}
is independent of $q$ and $p$.

\begin{theorem}
For every pair $(n,p)\in\mathcal{N}$ for every point $D\in\Xi_{n}^{(s)}$ and for every
  tangent vectors $X,Y\in\T_{n}$ we have
\begin{equation}
g^{(\RE)}_{D}(X,Y)=\frac{1}{2}\Tr(D^{-1}XD^{-1}Y)
\end{equation}
  for the quadratic form generated by the $q$-R\'enyi entropy.
\end{theorem}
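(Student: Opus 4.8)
The plan is to compute the second mixed partial derivative of $-\tfrac{1}{2}\log\det(D+tX+sY)$ directly, using the standard matrix-calculus facts about the logarithmic determinant. The key observation, already recorded in Equation~(\ref{eq:q-Renyi altalanos}), is that the entropy-generated quadratic form reduces entirely to
\begin{equation*}
g^{(\RE)}_{D}(X,Y)=-\frac{1}{2}\left.\frac{\partial^{2}}{\partial s\partial t}
  \log\det(D+tX+sY)\right|_{t=s=0},
\end{equation*}
so that the whole task is an exercise about the function $D\mapsto\log\det D$ on $\MN$, independent of $p$ and $q$.

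First I would recall the first-order formula
\begin{equation*}
\left.\frac{\partial}{\partial t}\log\det(D+tX)\right|_{t=0}=\Tr(D^{-1}X),
\end{equation*}
which follows from Jacobi's formula $\frac{\dint}{\dint t}\log\det M(t)=\Tr\bigl(M(t)^{-1}M'(t)\bigr)$ applied to $M(t)=D+tX$. To get the mixed second derivative I would set $M(t,s)=D+tX+sY$, differentiate first in $t$ to obtain $\frac{\partial}{\partial t}\log\det M=\Tr\bigl(M^{-1}X\bigr)$, and then differentiate this in $s$. The only nontrivial ingredient is the derivative of the inverse, namely
\begin{equation*}
\frac{\partial}{\partial s}M^{-1}=-M^{-1}\bigl(\partial_{s}M\bigr)M^{-1}=-M^{-1}YM^{-1},
\end{equation*}
which follows by differentiating the identity $M M^{-1}=I$. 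Substituting and evaluating at $t=s=0$, where $M=D$, gives
\begin{equation*}
\left.\frac{\partial^{2}}{\partial s\partial t}\log\det M\right|_{t=s=0}
  =\Tr\bigl(-D^{-1}YD^{-1}X\bigr)=-\Tr(D^{-1}XD^{-1}Y),
\end{equation*}
using the cyclicity of the trace. Multiplying by $-\tfrac{1}{2}$ yields the claimed expression $\tfrac{1}{2}\Tr(D^{-1}XD^{-1}Y)$.

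I would then note that this is indeed a well-defined symmetric bilinear form: symmetry in $X$ and $Y$ is immediate from the cyclicity of the trace together with the symmetry of $D^{-1}$, and positive-definiteness on $\TMN$ follows because $\Tr(D^{-1}XD^{-1}X)=\Tr\bigl((D^{-1/2}XD^{-1/2})^{2}\bigr)\geq 0$ with equality only when $X=0$, since $D^{-1/2}XD^{-1/2}$ is symmetric. This confirms that $g^{(\RE)}$ really is a Riemannian metric on $\Xi_{n}^{(s)}$, consistent with the convexity remarks preceding the statement. I do not anticipate any genuine obstacle here; the computation is entirely routine matrix calculus, and the only point requiring slight care is bookkeeping the order of the two factors $X$ and $Y$ inside the trace before invoking cyclicity to present the answer in the symmetric form above.
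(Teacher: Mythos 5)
Your proof is correct, but it takes a genuinely different route from the paper's. You invoke Jacobi's formula $\frac{\dint}{\dint t}\log\det M(t)=\Tr\bigl(M(t)^{-1}M'(t)\bigr)$ and then differentiate the resulting expression $\Tr(M^{-1}X)$ a second time using $\partial_{s}M^{-1}=-M^{-1}YM^{-1}$, evaluating at $t=s=0$. The paper instead writes $\log\det A=\Tr\log A$ and uses the integral representation
\begin{equation*}
\log A=\int_{0}^{\infty}(E+\tau E)^{-1}-(A+\tau E)^{-1}\ \dint\tau,
\end{equation*}
differentiates the resolvent twice under the integral sign, and finishes by computing $\int_{0}^{\infty}(1+\tau)^{-3}\ \dint\tau=\frac{1}{2}$. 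Your approach is shorter and avoids having to justify the interchange of differentiation with an improper integral, at the cost of taking Jacobi's formula and the derivative-of-the-inverse identity as the black boxes; the paper's route derives everything from a single representation of the matrix logarithm, which makes it self-contained and reusable (the same device is used again in the paper's computation of $g^{(\EN,p,q)}$). Your closing verification that the bilinear form is symmetric and positive definite on $\TMN$ is a welcome addition that the paper only addresses implicitly via the convexity remark preceding the theorem; both your reduction to the function $D\mapsto-\frac{1}{2}\log\det D$ via Equation (\ref{eq:q-Renyi altalanos}) and your trace manipulations are sound.
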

\begin{proof}
To compute the derivative of the function $-\frac{1}{2}\log\det D$ we use the following
  equalities for symmetric strictly positive matrices
\begin{equation*}
\log\det A=\Tr\log A\qquad \log A=\int_{0}^{\infty}(E+\tau E)^{-1}-(A+\tau E)^{-1}\ \dint\tau,
\end{equation*}
  where $E$ denotes the identity  matrix.
Then the derivative is
\begin{align*}
& \left.\frac{\partial^{2}}{\partial s\partial t}-\frac{1}{2}\log\det(D+tX+sY)\right\vert_{t=s=0}
  =-\frac{1}{2}\left.\frac{\partial^{2}}{\partial s\partial t}\log(\det D)
  (\det(E+tXD^{-1}+sYD^{-1})\right\vert_{t=s=0}\\
& =-\frac{1}{2}\left.\frac{\partial^{2}}{\partial s\partial t}\log\det(E+tXD^{-1}+sYD^{-1}
  \right\vert_{t=s=0}
  =-\frac{1}{2}\left.\frac{\partial^{2}}{\partial s\partial t}\Tr\log(E+tXD^{-1}+sYD^{-1})
  \right\vert_{t=s=0}\\
&=-\frac{1}{2}\Tr\int_{0}^{\infty}\left.\frac{\partial^{2}}{\partial s\partial t} \left(
  (E+\tau E)^{-1}-(E+tXD^{-1}+sYD^{-1}+\tau E)^{-1}\right)\right\vert_{t=s=0}\ \dint\tau\\
&=\frac{1}{2}\Tr\int_{0}^{\infty} (E+\tau E)^{-1}
  \Bigl(YD^{-1}(E+\tau E)^{-1}XD^{-1}+XD^{-1}(E+\tau E)^{-1}YD^{-1}\Bigr)(E+\tau E)^{-1}
  \ \dint\tau\\
&=\Bigl(\Tr XD^{-1}YD^{-1}\Bigr)\int_{0}^{\infty}(1+\tau)^{-3}\ \dint\tau.
\end{align*}
This proofs the equality $g^{(\RE)}_{D}(X,Y)=\frac{1}{2}\Tr(D^{-1}XD^{-1}Y)$.
\end{proof}

The Tsallis entropy \cite{Tsa1} of the probability distribution $f$ is defined as
\begin{equation}
\label{eq:Tsallis entropy}
S^{(q)}(f)=\frac{1}{1-q}\left(\int_{\mathbb{R}}f(x)^{q}\ \dint x-1 \right)
\end{equation}
for parameter $q\in\mathbb{R}^{+}\setminus\left\lbrace 1\right\rbrace$.
Let us denote the quadratic form generated by the Tsallis entropy by $g^{(\EN,p,q)}$,
  i.e. for every point $D\in\Xi_{n}^{(s)}$ and tangent vectors $X,Y\in\T_{n}$
\begin{equation*}
g^{(\EN,p,q)}_{D}(X,Y)=\left.\frac{\partial^{2}}{\partial s\partial t}
 S^{(q)}(f_{p}(D+tX+sY,\un{0},\cdot)\right\vert_{t=s=0}
\end{equation*}
if $S^{(q)}(f_{p}(D+tX+sY,\un{0},\cdot)$ is well defined.

\begin{theorem}
For every pair $(n,p)\in\mathcal{N}$ for every point $D\in\Xi_{n}^{(s)}$ and for every tangent
  vectors $X,Y\in\T_{n}$ we have the following expressions for the quadratic form generated
  by the entropy
\begin{equation}
g^{(\EN,p,1)}_{D}(X,Y)=g^{(\EN,1,1)}_{D}(X,Y)=\frac{1}{2}\Tr(D^{-1}XD^{-1}Y).
\end{equation}
If $S^{(q)}(f_{p}(D,\un{u},\cdot))$ is well defined, then the generated quadratic form is
\begin{equation}
\label{eq:g^(EN,p,q)}
g^{(\EN,p,q)}_{D}(X,Y)=A_{n,p,q}'\det(D)^{\frac{q-1}{2}}
  \left(\Tr(D^{-1}XD^{-1}Y)-\frac{q-1}{2}\Tr XD^{-1}\Tr YD^{-1}\right),
\end{equation}
where
\begin{equation*}
A_{n,p,q}'=\left\lbrace\begin{array}{lll}
  \di\hskip-7pt\frac{1}{2}\hskip-2pt
  \left(\frac{p-1}{\pi(2p-n(p-1))}\right)^{\frac{n(q-1)}{2}}\hskip-5pt\times\hskip-4pt
  \left(\frac{\Gamma\left(\frac{p}{p-1}+\frac{n}{2} \right)}{\Gamma\left(\frac{p}{p-1}\right)}
  \right)^{q}\hskip-2pt
  \frac{\Gamma\left(\frac{q}{p-1}+1 \right)}{\Gamma\left(\frac{q}{p-1}+1+\frac{n}{2}\right)},
  &\hskip-5pt\mbox{if}&\di p>1,\ q>0;\\
  \di\hskip-7pt\frac{1}{2}\hskip-2pt
  \left(\frac{1-p}{\pi(2p-n(p-1))}\right)^{\frac{n(q-1)}{2}}\hskip-5pt\times\hskip-4pt
  \left(\frac{\Gamma\left(\frac{1}{1-p}\right)}{\Gamma\left(\frac{1}{1-p}-\frac{n}{2}\right)}
  \right)^{q}\hskip-2pt
  \frac{\Gamma\left(\frac{q}{1-p}-\frac{n}{2}\right)}{\Gamma\left(\frac{q}{1-p}\right)},
  &\hskip-15pt\mbox{if}&\di \hskip-10pt p<1,\ q>\frac{n(1-p)}{2}.
  \end{array}\right.
\end{equation*}
\end{theorem}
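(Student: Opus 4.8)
The plan is to reduce everything to the closed form of the integral $I=\int f_{p}^{q}$ already obtained in the preceding Theorem. Equations (\ref{eq:intf_p^q1}) and (\ref{eq:intf_p^q2}) show that $I$ factors as a constant $B_{n,p,q}$, independent of $D$, times $(\det D)^{\frac{q-1}{2}}$; here $B_{n,p,q}$ is precisely the Gamma-and-power prefactor displayed there, with $a/\pi$ equal to $\frac{p-1}{\pi(2p-n(p-1))}$ when $p>1$ and to $\frac{1-p}{\pi(2p-n(p-1))}$ when $p<1$. Consequently the Tsallis entropy of $f_{p}(D,\un{0},\cdot)$ is $\frac{1}{1-q}\bigl(B_{n,p,q}(\det D)^{\frac{q-1}{2}}-1\bigr)$, and since the summand $-1$ and the factor $\frac{B_{n,p,q}}{1-q}$ do not depend on $D$, computing $g^{(\EN,p,q)}$ amounts to taking the mixed second derivative of $(\det(D+tX+sY))^{\frac{q-1}{2}}$ at $t=s=0$.

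For this derivative I would reuse the machinery of the previous proof. Writing $\det(D+tX+sY)=\det D\cdot\det M$ with $M=E+tXD^{-1}+sYD^{-1}$ and $(\det M)^{\frac{q-1}{2}}=\exp\bigl(\frac{q-1}{2}\Tr\log M\bigr)$, I set $\psi(t,s)=\frac{q-1}{2}\Tr\log M$. At the origin $M=E$, so $\psi=0$ and the mixed derivative of $e^{\psi}$ reduces to $\psi_{t}\psi_{s}+\psi_{ts}$. The first-order terms give $\psi_{t}|_{0}=\frac{q-1}{2}\Tr(XD^{-1})$ and $\psi_{s}|_{0}=\frac{q-1}{2}\Tr(YD^{-1})$, while the identity $\partial_{s}M^{-1}=-M^{-1}(YD^{-1})M^{-1}$ together with the cyclicity of the trace yields $\psi_{ts}|_{0}=-\frac{q-1}{2}\Tr(D^{-1}XD^{-1}Y)$.

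Collecting these, the mixed derivative of $(\det(D+tX+sY))^{\frac{q-1}{2}}$ at the origin equals $-\frac{q-1}{2}(\det D)^{\frac{q-1}{2}}\bigl(\Tr(D^{-1}XD^{-1}Y)-\frac{q-1}{2}\Tr XD^{-1}\,\Tr YD^{-1}\bigr)$. Multiplying by $\frac{B_{n,p,q}}{1-q}$ and using $\frac{1}{1-q}\cdot\bigl(-\frac{q-1}{2}\bigr)=\frac12$ gives exactly the formula (\ref{eq:g^(EN,p,q)}) with $A_{n,p,q}'=\frac12 B_{n,p,q}$; reading off $B_{n,p,q}$ from the two branches reproduces the two stated cases of the constant. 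For the $q=1$ line I would avoid differentiating directly and instead note that $S^{(q)}$ degenerates to the Shannon entropy as $q\to1$, which is also the $q=1$ value of the R\'enyi entropy; hence $g^{(\EN,p,1)}$ coincides with the quadratic form $g^{(\RE)}_{D}=\frac12\Tr(D^{-1}XD^{-1}Y)$ of the previous Theorem, independently of $p$. The same value also arises as the $q\to1$ limit of the general formula, where $(\det D)^{\frac{q-1}{2}}\to1$, the $\frac{q-1}{2}$-term drops, and a cancellation of Gamma factors leaves $B_{n,p,1}=1$ in both branches.

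I expect the only delicate step to be the bookkeeping in the mixed derivative: keeping the product $\psi_{t}\psi_{s}$, which is the source of the $\Tr XD^{-1}\,\Tr YD^{-1}$ term, cleanly separated from the genuine second-order contribution $\psi_{ts}$, and tracking the sign so that the prefactor collapses to $\frac12$. Everything else is routine matrix calculus, and the whole computation is essentially a differentiated shadow of the integral evaluation already performed.
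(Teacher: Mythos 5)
Your proposal is correct and follows essentially the same route as the paper: both reduce the Tsallis entropy to a $D$-independent constant times $(\det D)^{\frac{q-1}{2}}$ using the integral evaluated in the preceding Theorem, compute the mixed second derivative of $(\det(D+tX+sY))^{\frac{q-1}{2}}$ by the same $\Tr\log$ matrix calculus (the paper via first and second derivatives of $\det$, you via $\exp\bigl(\frac{q-1}{2}\Tr\log M\bigr)$, which is the identical computation), and settle the $q=1$ line by passing to the Shannon-entropy limit and invoking the earlier $g^{(\RE)}$ Theorem. Your sign bookkeeping and the identification $A'_{n,p,q}=\frac{1}{2}B_{n,p,q}$ match the paper's proof exactly.
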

\begin{proof}
Since we have the limit
\begin{equation*}
\lim_{q\to 1}S^{(q)}(f_{p}(D,\un{u},\cdot))=\lim_{q\to 1}S_{q}(f_{p}(D,\un{u},\cdot))
  =S(f_{p}(D,\un{u},\cdot))
\end{equation*}
the formula $g^{(\EN,1,q)}_{D}(X,Y)=g^{(\EN,1,1)}_{D}(X,Y)$ is straightforward from the Equation
 (\ref{eq:q-Renyi altalanos}) and the metric was computed in the previous Theorem.

Now let us compute the derivative of the determinant function.
\begin{align*}
& \left.\frac{\dint}{\dint t}(\det(D+tX))\right\vert_{t=0}
  =\det D\left.\frac{\dint}{\dint t}\det(E+tXD^{-1}))\right\vert_{t=0}
  =\det D\left.\frac{\dint}{\dint t}(\exp\Tr\log(D+tX))\right\vert_{t=0}\\
& =\det D\Tr\left.\frac{\dint}{\dint t}\int_{0}^{\infty}
  (E+\tau E)^{-1}-(E+tXD^{-1}+\tau E)^{-1}\right\vert_{t=0}\ \dint\tau\\
& =\det D\Tr\int_{0}^{\infty}(E+\tau E)^{-1}XD^{-1}(E+\tau E)^{-1}\ \dint\tau
  =(\det D)(\Tr XD^{-1}).
\end{align*}
This can be expressed as $(\dint \det)(D)(X)=(\det D)(\Tr XD^{-1})$.
The second derivative of the determinant is
\begin{equation*}
(\dint^{2}\det)(D)(X)(Y)=(\det D)(\Tr XD^{-1})(\Tr YD^{-1})-(\det D)(\Tr XD^{-1}YD^{-1}).
\end{equation*}
According to these equalities the derivative of the function $(\det D)^{\frac{p-1}{2}}$ is
\begin{align*}
(\dint^{2} \det{}^{\frac{p-1}{2}})(D)(X)(Y)&=
  \left.\frac{\partial^{2}}{\partial s\partial t}(\det(D+tX+sY))^{\frac{p-1}{2}}
  \right\vert_{t=s=0}\\
&=\frac{p-1}{2}(\det D)^{\frac{p-1}{2}}\left(-\Tr XD^{-1}YD^{-1}
  +\frac{p-1}{2}\Tr XD^{-1}\Tr YD^{-1}\right).
\end{align*}
This gives the Equation (\ref{eq:g^(EN,p,q)}) and the constants $A'_{n,p,q}$ come from the
  Equations (\ref{eq:intf_p^q1},\ref{eq:intf_p^q2}).
\end{proof}

The Fisher information matrix is defined on parametric probability distributions.
At a point $(D,\un{u})\in\Xi_{n}$ the quadratic form
\begin{equation*}
g^{(\F,q)}_{D,\un{u}}((X,\un{x}),(Y,\un{y}))=\int_{\mathbb{R}^{n}}f_{q}(D,\un{u},\un{z})
  \frac{\partial\log f_{q}(D,\un{u},\un{z})}{\partial (X,\un{x})}
  \frac{\partial\log f_{q}(D,\un{u},\un{z})}{\partial (Y,\un{y})}\ \dint\un{z}
\end{equation*}
  (if the integral exists) gives rise to a positive definite matrix, so inducing a
  Riemannian metric on $\Xi_{n}$.
This metric is often called the expected information metric for the family of probability
  density functions; the original ideas are due to Fisher \cite{Fis} and Rao \cite{Rao}.

\begin{theorem}
For every pair $(n,p)\in\mathcal{N}$, where $p<2$ for every point $(D,\un{u})\in\Xi_{n}$
  and for every tangent vectors $(X,\un{x}),(Y,\un{y})\in\T_{n}$ the Fisher information matrix
  of $M_{p}$ is
\begin{align*}
g^{(\F,p)}_{D,\un{u}}((X,\un{x}),(Y,\un{y}))=&\frac{1}{2(2-p)}\Tr\bigl(D^{-1}XD^{-1}Y\bigr)+
  \frac{p-1}{4(2-p)}\Tr\bigl(D^{-1}X)\Tr\bigl(D^{-1}Y)\\
  &+\frac{2+n(p-1)}{(2p+n(p-1))(2-p)}\langle\un{x},D\un{y}\rangle.
\end{align*}
\end{theorem}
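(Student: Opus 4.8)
The plan is to compute the score (logarithmic derivative) of $f_{p}$ and integrate its square against $f_{p}$, exactly as the definition of $g^{(\F,p)}$ demands. First I would write the log-density, using that $A_{n,p}$ and the constant $c=\frac{1-p}{2p-n(1-p)}$ do not depend on $(D,\un{u})$:
\begin{equation*}
\log f_{p}(D,\un{u},\un{z})=\log A_{n,p}+\frac{1}{2}\log\det D+\frac{1}{p-1}\log\bigl(1+cQ(\un{z})\bigr),\quad Q(\un{z})=\langle\un{z}-\un{u},D(\un{z}-\un{u})\rangle.
\end{equation*}
Differentiating in the direction $(X,\un{x})$, using $(\dint\log\det)(D)(X)=\Tr(D^{-1}X)$ from the previous Theorem together with the chain rule on $Q$, gives the score
\begin{equation*}
\frac{\partial\log f_{p}}{\partial(X,\un{x})}=\frac{1}{2}\Tr(D^{-1}X)+\frac{c}{p-1}\cdot\frac{\langle\un{w},X\un{w}\rangle-2\langle\un{x},D\un{w}\rangle}{1+c\langle\un{w},D\un{w}\rangle},\quad \un{w}=\un{z}-\un{u}.
\end{equation*}

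Next I would multiply the scores for $(X,\un{x})$ and $(Y,\un{y})$ and integrate against $f_{p}\,\dint\un{z}$, which after the shift $\un{w}=\un{z}-\un{u}$ is even in $\un{w}$. The constant $\tfrac12\Tr(D^{-1}X)$ and the quadratic piece $\langle\un{w},X\un{w}\rangle$ are even, while the linear piece $-2\langle\un{x},D\un{w}\rangle$ is odd, so every even$\times$odd cross term integrates to zero. This parity is exactly what forces the metric to split into a pure-$D$ block (the two $\Tr$ terms) and a pure-$\un{u}$ block (the $\langle\un{x},D\un{y}\rangle$ term), with no mixed entries. Moreover, since $f_{p}$ and its first derivative vanish on the boundary $\{1+cQ=0\}$ when $p<2$, differentiation under the integral is legitimate and the score has zero mean; writing $E[\cdot]$ for expectation under $f_{p}$, this yields $\frac{c}{p-1}E\bigl[\frac{\langle\un{w},X\un{w}\rangle}{1+cQ}\bigr]=-\frac12\Tr(D^{-1}X)$, which I would use to collapse all the constant-times-quadratic cross terms. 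After that collapse, only two genuine integrals survive,
\begin{equation*}
E\!\left[\frac{\langle\un{w},X\un{w}\rangle\langle\un{w},Y\un{w}\rangle}{(1+cQ)^{2}}\right],\qquad E\!\left[\frac{\langle\un{x},D\un{w}\rangle\langle\un{y},D\un{w}\rangle}{(1+cQ)^{2}}\right].
\end{equation*}

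To evaluate these I would diagonalize $D$ and rescale as in the first Theorem, $y_{i}=\sqrt{a\lambda_{i}}\,w_{i}$, turning $1+cQ$ into $1-|\un{y}|^{2}$ for $p>1$ (integration over the unit ball $B_{n}(1)$) or $1+|\un{y}|^{2}$ for $p<1$ (integration over $\mathbb{R}^{n}$). By rotational symmetry the matrix integral reduces to the scalars $\int y_{i}^{4}(1\mp|\un{y}|^{2})^{\frac{1}{p-1}-2}\,\dint\un{y}$ and $\int y_{i}^{2}y_{j}^{2}(1\mp|\un{y}|^{2})^{\frac{1}{p-1}-2}\,\dint\un{y}$ for $i\neq j$, and the vector integral to $\int y_{i}^{2}(1\mp|\un{y}|^{2})^{\frac{1}{p-1}-2}\,\dint\un{y}$; each is a one-dimensional radial integral times the sphere surface $F_{n}$, evaluated by the same Beta/Gamma identity used in the first Theorem. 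The decisive feature is that squaring the score lowers the exponent from $\frac{1}{p-1}$ to $\frac{1}{p-1}-2$, which is what manufactures the factors $(2-p)$ and $2p+n(p-1)=2p-n(1-p)$ in the denominators of the final formula; integrability at $\partial B_{n}(1)$ requires $\frac{1}{p-1}-2>-1$, i.e.\ $p<2$, explaining the hypothesis.

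Finally I would reassemble the matrix block as $-\frac14\Tr(D^{-1}X)\Tr(D^{-1}Y)$ (from the collapsed cross terms) plus $\frac{c^{2}}{(p-1)^{2}}$ times the first integral, and the vector block as $\frac{4c^{2}}{(p-1)^{2}}$ times the second, then insert the explicit Gamma ratios and simplify. The main obstacle is bookkeeping rather than conceptual: one must carry $A_{n,p}$ and $c$ through the shifted-exponent radial integrals, run the $p>1$ and $p<1$ cases in parallel (they produce the same closed form), and check that the constant–constant, constant–quadratic and quadratic–quadratic contributions combine into exactly the coefficients $\frac{1}{2(2-p)}$ and $\frac{p-1}{4(2-p)}$. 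I expect the most delicate single verification to be the off-diagonal term $\int y_{i}^{2}y_{j}^{2}(\cdots)\,\dint\un{y}$, whose value must conspire with the diagonal $\int y_{i}^{4}(\cdots)\,\dint\un{y}$ and with the collapsed cross terms to recover the clean trace structure $\Tr(D^{-1}XD^{-1}Y)$ and $\Tr(D^{-1}X)\Tr(D^{-1}Y)$.
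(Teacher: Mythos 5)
Your proposal is correct, and its computational engine is the same as the paper's: write $\log f_{p}$, differentiate, diagonalize $D$, rescale $y_{i}=\sqrt{a\lambda_{i}}w_{i}$ so that $1+cQ$ becomes $1-\vert\un{y}\vert^{2}$ (for $p>1$, integration over the unit ball) or $1+\vert\un{y}\vert^{2}$ (for $p<1$, integration over $\mathbb{R}^{n}$), and evaluate the resulting radial integrals by the Beta/Gamma identity; your diagnosis that $p<2$ is precisely integrability of the squared score (exponent $\frac{1}{p-1}-2>-1$ at the boundary) matches the paper's remark that the Fisher information diverges for $p\geq 2$. You deviate in two ways, both gains in economy. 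First, the paper merely states that the mixed entries $g^{(\F,p)}_{D,\un{u}}((E_{ii},0),(0,\un{e_{k}}))$ vanish, without computation; your parity argument (density even in $\un{w}$, matrix part of the score even, vector part odd) actually proves this block-diagonality, so on this point your write-up is more complete than the paper's. Second, the paper evaluates the constant-times-quadratic cross terms as explicit integrals (each equal to $-\frac{1}{4\lambda_{i}\lambda_{k}}$, cancelling the constant-constant term down to $-\frac{1}{4}\Tr(D^{-1}X)\Tr(D^{-1}Y)$), whereas you collapse them at once via the zero mean of the score; this is equivalent but eliminates one family of Gamma integrals, leaving only the fourth-moment integrals $\int y_{i}^{4}(\cdots)$, $\int y_{i}^{2}y_{j}^{2}(\cdots)$ and the second-moment integral $\int y_{i}^{2}(\cdots)$, which are exactly the ones the paper computes. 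One caution on the collapse: for $p>1$ the domain $\Dom(p,D,\un{u})$ moves with the parameters, so the zero-mean identity requires the boundary terms to vanish, which holds because $f_{p}$ vanishes on $\{1+cQ=0\}$ whenever $p>1$; keep that requirement separate from the hypothesis $p<2$, whose role is finiteness of the second moments of the score, not legitimacy of differentiating under the integral.
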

\begin{proof}
At a point $(D,\un{u})\in\Xi$ we have
\begin{equation}\label{eq:logfq}
\log f_{p}(D,\un{u},\un{x})=\log A_{n,p}+\frac{1}{2}\log\det{D}
  +\frac{1}{p-1}\log\left(1+\frac{1-p}{2p-n(1-p)}\langle\un{x}-\un{u},D(\un{x}-\un{u})
  \rangle \right).
\end{equation}
Choose our new coordinate system in $\mathbb{R}^{n}$ parallel to the eigenvectors of $D$.
In this coordinate system $D$ is diagonal, with entries $(\lambda_{i})_{i=1,\dots,n}$.
Let us denote by $(\un{e_{k}})_{k=1,\dots,n}$ the orthonormal basis.
According to Equation (\ref{eq:deriv_def}) the partial derivative of Equation (\ref{eq:logfq})
  with respect to a basis vector is
\begin{equation}
\label{eq:d log(f_q)/d e_k}
\frac{\partial \log f_{p}(D,\un{u},\un{x})}{\partial(0,\un{e_{k}}) }=
\frac{2}{2p-n(1-p)}\frac{1}{1+\frac{1-p}{2p-n(1-p)}\langle\un{x}-\un{u},D(\un{x}-\un{u})\rangle}
 \lambda_{k}(x_{k}-u_{k}).
\end{equation}
First we consider the $p\in\left\rbrack \frac{n}{n+2},1\right\lbrack$ case.
The Fisher information is
\begin{align*}
g^{(\F,p)}_{D,\un{u}}(&(0,\un{e_{k}}),(0,\un{e_{l}}))= \\
  &\frac{4\lambda_{k}\lambda_{l}A_{n,p}\sqrt{\det D}}{(2p-n(1-p))^{2}}\int_{\mathbb{R}^{n}}
 \left(1+\frac{1-p}{2p-n(1-p)}\langle\un{x}-\un{u},D(\un{x}-\un{u})
   \rangle \right)^{\frac{1}{p-1}-2}(x_{k}-u_{k})(x_{l}-u_{l})\ \dint\un{x}
\end{align*}
Introducing the new variables $a=\frac{1-p}{2p-n(1-p)}$, $y_{i}=\sqrt{a\lambda_{i}}(x_{i}-u_{i})$
  we have
\begin{equation*}
g^{(\F,p)}_{D,\un{u}}((0,\un{e_{k}}),(0,\un{e_{l}}))=\delta_{kl}\frac{A_{n,p}}{a^{\frac{n}{2}+1}}
  \frac{4\lambda_{k}}{(2p-n(1-p))^2}\int_{\mathbb{R}^{n}}\left(1+\sum_{i=1}^{n}y_{i}^{2}
  \right)^{\frac{1}{p-1}-2} y_{k}^{2}\dint \un{y}.
\end{equation*}
If $n=1$
\begin{equation*}
g^{(\F,p)}_{D,\un{u}}((0,\un{e_{1}}),(0,\un{e_{1}}))=\frac{A_{1,p}}{\sqrt{a}a}
  \frac{4\lambda_{1}}{(3p-1)^2}\int_{-\infty}^{\infty}\left(1+y^{2}\right)^{\frac{1}{p-1}-2}
  y^{2}\dint y
 =\lambda_{1}\frac{1+p}{(3p-1)(2-p)}
\end{equation*}
  and if $n>1$ then in the spherical coordinates in $n-1$ dimension we have
\begin{equation*}
g^{(\F,p)}_{D,\un{u}}((0,\un{e_{k}}),(0,\un{e_{l}}))=\delta_{kl}\frac{A_{n,p}}{a^{\frac{n}{2}+1}}
\frac{4\lambda_{k}}{(2p-n(1-p))^2}\int_{0}^{\infty}\int_{-\infty}^{\infty}
\left(1+y_{k}^{2}+r^{2}\right)^{\frac{1}{p-1}-2}
y_{k}^{2}r^{n-2}F_{n-1}\dint y_{k}\dint r.
\end{equation*}
Using the integral formulas
\begin{align*}
\int_{0}^{\infty}\int_{-\infty}^{\infty}
  \left(1+y_{k}^{2}+r^{2}\right)^{\frac{1}{p-1}-2}y_{k}^{2}r^{n-2}\dint y_{k}\dint r&=
  \frac{\sqrt{\pi}\Gamma\left(\frac{1}{1-p}+\frac{1}{2}\right)}
     {2\Gamma\left(\frac{1}{1-p}+2\right)}
  \int_{0}^{\infty}(1+r^{2})^{\frac{1}{p-1}-\frac{1}{2}}r^{n-2}\dint r\\
&=\frac{\Gamma\left(\frac{1}{1-p}+1-\frac{n}{2}\right)\Gamma\left(\frac{n-1}{2}\right)}
       {2\Gamma\left(\frac{1}{1-p}+\frac{1}{2}\right)}
\end{align*}
and after some simplification we have
\begin{equation}
\label{eq:q^(F,q) 1. part}
g^{(\F,p)}_{D,\un{u}}((0,\un{x}),(0,\un{y}))=
  \frac{2+n(p-1)}{(2p+n(p-1))(2-p)}\langle\un{x},D\un{y}\rangle
\end{equation}
  which is valid for every $n\in\mathbb{N}\setminus\left\lbrace 0\right\rbrace$.
Since $D$ is diagonal the partial derivative of the Equation (\ref{eq:logfq}) is
\begin{equation}
\label{eq:d log(f_q)/d E_ii}
\frac{\partial \log f_{p}(D,\un{u},\un{x})}{\partial(E_{ii},0) }=\frac{1}{2\lambda_{i}}
  +\frac{a}{p-1}\frac{(x_{i}-u_{i})^{2}}{1+a\langle\un{x}-\un{u},D(\un{x}-\un{u})\rangle}.
\end{equation}
The Fisher information is
\begin{align*}
g^{(\F,p)}_{D,\un{u}}((E_{ii},0),&(E_{kk},0))=
 \frac{1}{4}\frac{1}{\lambda_{i}\lambda_{k}}
  +\frac{a A_{n,p}\sqrt{\det D}}{2(p-1)\lambda_{k}}\int_{\mathbb{R}^{n}}
  \left(1+a\langle\un{x}-\un{u},D(\un{x}-\un{u})\rangle \right)^{\frac{1}{p-1}-1}
  (x_{i}-u_{i})^{2} \ \dint\un{x}\\
&+\frac{a A_{n,p}\sqrt{\det D}}{2(p-1)\lambda_{i}}\int_{\mathbb{R}^{n}}
  \left(1+a\langle\un{x}-\un{u},D(\un{x}-\un{u})\rangle \right)^{\frac{1}{p-1}-1}
  (x_{k}-u_{k})^{2} \ \dint\un{x}\\
&+\frac{a^{2} A_{n,p}\sqrt{\det D}}{(p-1)^{2}}\int_{\mathbb{R}^{n}}
  \left(1+a\langle\un{x}-\un{u},D(\un{x}-\un{u})\rangle \right)^{\frac{1}{p-1}-2}
  (x_{i}-u_{i})^{2}(x_{k}-u_{k})^{2} \ \dint\un{x}.
\end{align*}
The first integral is
\begin{equation*}
\frac{A_{n,p}}{2(p-1)a^{\frac{n}{2}}\lambda_{k}\lambda_{i}}
 \int_{0}^{\infty}\int_{-\infty}^{\infty} (1+y_{i}^{2}+r^2)^{\frac{1}{p-1}-1}
   y_{i}^{2}r^{n-2}F_{n-1}\ \dint y_{i}\dint r=-\frac{1}{4\lambda_{i}\lambda_{k}}.
\end{equation*}
The third one is if $i\neq k$
\begin{equation*}
\frac{A_{n,p}}{(p-1)^{2}a^{\frac{n}{2}}\lambda_{k}\lambda_{i}}
 \int_{0}^{\infty}\int_{-\infty}^{\infty}\int_{-\infty}^{\infty}
 (1+y_{i}^{2}+y_{k}^{2}+r^2)^{\frac{1}{p-1}-2} y_{i}^{2}y_{k}^{2}r^{n-3}F_{n-2}
  \ \dint y_{i}\ \dint y_{k}\dint r=\frac{1}{4(2-p)\lambda_{i}\lambda_{k}}
\end{equation*}
and if $i=k$
\begin{equation*}
\frac{A_{n,p}}{(p-1)^{2}a^{\frac{n}{2}}\lambda_{k}^{2}}
 \int_{0}^{\infty}\int_{-\infty}^{\infty}
 (1+y_{k}^{2}+r^2)^{\frac{1}{p-1}-2}y_{k}^{4}r^{n-2}F_{n-1}
 \ \dint y_{k}\dint r=\frac{3}{4(2-p)\lambda_{k}^{2}}.
\end{equation*}
Combining these integrals
\begin{equation*}
g^{(\F,p)}_{D,\un{u}}((E_{ii},0),(E_{kk},0))=\frac{1}{4}\left(\frac{1+2\delta_{ik}}{2-p}-1\right)
 \frac{1}{\lambda_{i}\lambda_{k}}.
\end{equation*}
If $D$ is not diagonal this can be expressed as
\begin{equation}
\label{eq:q^(F,q) 2. part}
g^{(\F,p)}_{D,\un{u}}((X,0),(Y,0))=\frac{1}{2(2-p)}\Tr\bigl(D^{-1}XD^{-1}Y\bigr)+
  \frac{p-1}{4(2-p)}\Tr\bigl(D^{-1}X)\Tr\bigl(D^{-1}Y).
\end{equation}
Finally the formulas (\ref{eq:q^(F,q) 1. part},\ref{eq:q^(F,q) 2. part}) give us the metric since
\begin{equation}
\label{eq:q^(F,q) 3. part}
g^{(\F,p)}_{D,\un{u}}((E_{ii},0),(0,\un{e_{k}}))=0.
\end{equation}
If $p>1$ then the partial derivatives given by the Equations
  (\ref{eq:d log(f_q)/d e_k},\ref{eq:d log(f_q)/d E_ii}) are the same.
The Fisher information is
\begin{equation*}
g^{(\F,p)}_{D,\un{u}}(0,\un{e_{k}}),(0,\un{e_{l}}))=
  \frac{4\lambda_{k}\lambda_{l}a^{2}A_{n,p}\sqrt{\det D}}{(p-1)^{2}}
  \hskip-8pt\int\limits_{\Dom(p,D,\un{u})}\hskip-8pt
 \left(1-a\langle\un{x}-\un{u},D(\un{x}-\un{u})\rangle \right)^{\frac{1}{p-1}-2}
 (x_{k}-u_{k})(x_{l}-u_{l})\ \dint\un{x},
\end{equation*}
  where $a=\dfrac{p-1}{2p-n(1-p)}$.
Introducing the new variables $y_{i}=\sqrt{a\lambda_{i}}(x_{i}-u_{i})$ we have
\begin{equation*}
g^{(\F,p)}_{D,\un{u}}((0,\un{e_{k}}),(0,\un{e_{l}}))=\delta_{kl}\frac{A_{n,p}}{a^{\frac{n}{2}}}
  \frac{4a\lambda_{k}}{(p-1)^2}\int_{B_{n}(1)}
  \left(1-\sum_{i=1}^{n}y_{i}^{2}\right)^{\frac{1}{p-1}-2}
  y_{k}^{2}\dint \un{y},
\end{equation*}
  where $B_{n}(1)$ is the closed unit ball in $\mathbb{R}^{n}$ with center origin.
If $n=1$
\begin{equation*}
g^{(\F,p)}_{D,\un{u}}((0,\un{e_{1}}),(0,\un{e_{1}}))=\frac{A_{1,p}}{\sqrt{a}}
\frac{4a\lambda_{1}}{(p-1)^2}\int_{-1}^{1}\left(1-y^{2}\right)^{\frac{1}{p-1}-2} y^{2}\dint y
 =\lambda_{1}\frac{1+p}{(3p-1)(2-p)}
\end{equation*}
  and if $n>1$ then in the spherical coordinates in $n-1$ dimension we have
\begin{equation*}
g^{(\F,p)}_{D,\un{u}}((0,\un{e_{k}}),(0,\un{e_{l}}))=\delta_{kl}\frac{A_{n,p}}{a^{\frac{n}{2}}}
\frac{4a\lambda_{k}}{(p-1)^2}\int_{0}^{1}\int\limits_{-\sqrt{1-r^{2}}}^{\sqrt{1-r^2}}
\left(1-y_{k}^{2}-r^{2}\right)^{\frac{1}{p-1}-2}
y_{k}^{2}r^{n-2}F_{n-1}\dint y_{k}\dint r.
\end{equation*}
Evaluating the integral
\begin{equation*}
\int_{0}^{1}\int\limits_{-\sqrt{1-r^{2}}}^{\sqrt{1-r^2}}\left(1-y_{k}^{2}-r^{2}
  \right)^{\frac{1}{p-1}-2} y_{k}^{2}r^{n-2}\dint y_{k}\dint r=
  \frac{1}{4}\frac{\sqrt{\pi}\Gamma\left(\frac{n-1}{2}\right)\Gamma\left(\frac{1}{p-1}-1\right)}
  {\Gamma\left(\frac{1}{p-1}+\frac{n}{2}\right)}
\end{equation*}
we get again Equation (\ref{eq:q^(F,q) 1. part}) for every $n$.
The Riemannian product of the matrix units is
\begin{align*}
g^{(\F,p)}_{D,\un{u}}((E_{ii},0),&(E_{kk},0))=
  \frac{1}{4}\frac{1}{\lambda_{i}\lambda_{k}}
  -\frac{a A_{n,p}\sqrt{\det D}}{2(p-1)\lambda_{k}}
  \hskip-8pt\int\limits_{\Dom(p,D,\un{u})}\hskip-8pt
  \left(1-a\langle\un{x}-\un{u},D(\un{x}-\un{u})\rangle \right)^{\frac{1}{p-1}-1}
  (x_{i}-u_{i})^{2} \ \dint\un{x}\\
&-\frac{a A_{n,p}\sqrt{\det D}}{2(p-1)\lambda_{i}}
  \hskip-8pt\int\limits_{\Dom(p,D,\un{u})}\hskip-8pt
  \left(1-a\langle\un{x}-\un{u},D(\un{x}-\un{u})\rangle \right)^{\frac{1}{p-1}-1}
  (x_{k}-u_{k})^{2} \ \dint\un{x}\\
&+\frac{a^{2} A_{n,p}\sqrt{\det D}}{(p-1)^{2}}
  \hskip-8pt\int\limits_{\Dom(p,D,\un{u})}\hskip-8pt
  \left(1-a\langle\un{x}-\un{u},D(\un{x}-\un{u})\rangle \right)^{\frac{1}{p-1}-2}
  (x_{i}-u_{i})^{2}(x_{k}-u_{k})^{2} \ \dint\un{x}.
\end{align*}
Introducing the variables $y_{i}=\sqrt{a\lambda_{i}}(x_{i}-u_{i})$ the domain $\Dom(p,D,\un{u})$
  will be transformed to $B_{n}(1)$.
Evaluating the integrals we get Equation (\ref{eq:q^(F,q) 2. part}).
Finally we note that Equation (\ref{eq:q^(F,q) 3. part}) is valid in this $p<1$ setting too,
  and this completes the proof.
\end{proof}

In the $p\geq 2$ case the Fisher information does not exist, since the integral which defines it
  divergent.

Relative entropies are special distance functions between probability measures and although
  there are several relative entropy functions, but most of them are special Csisz\'ar
  $\varphi$-divergences \cite{Csis1,Csis2}.
Assume that $\varphi:\mathbb{R}^{+}\to\mathbb{R}$ is a strictly convex function, and
  $\varphi(1)=0$.
Then one can define the Csisz\'ar $\varphi$-relative entropy as
\begin{equation*}
H^{(\varphi,p)}(f_{p}(D_{1},\un{u}_{1},\cdot),f_{p}(D_{2},\un{u}_{2},\cdot))
  =\int_{\mathbb{R}^{n}}f_{p}(D_{1},\un{u}_{1},\un{x})
  \varphi\left(\frac{f_{p}(D_{2},\un{u}_{2},\un{x})}{f_{p}(D_{1},\un{u}_{1},\un{x})}\right)\
  \dint\un{x}.
\end{equation*}
For example the Kullback--Liebler \cite{LeiKul}, Hellinger \cite{Hel} and $\alpha$-relative
  entropies are given by the functions $\varphi(x)=-\log x$, $\varphi(x)=(1-\sqrt{x})^{2}$ and
  $\varphi(x)=\dfrac{4}{1-\alpha^2}\left(1-x^{\frac{1+\alpha}{2}} \right)$.
We note that the $\alpha$-relative entropy is strongly related to the R\'enyi \cite{Ren} and
  to the Tsallis entropy \cite{Tsa1,Tsa2}.
The quadratic form induced by the $\varphi$-divergence is
\begin{equation*}
g^{(\varphi,p)}_{D,\un{u}}((X,\un{x}),(Y,\un{y}))=\left.\frac{\partial^{2}}{\partial s\partial t}
 H^{(\varphi)}(f_{p}(D,\un{u},\cdot),f_{p}(D+tX+sY,\un{u}+t\un{x}+s\un{y},\cdot))
 \right\vert_{t=s=0}.
\end{equation*}

\begin{theorem}
Assume that $\varphi:\mathbb{R}^{+}\to\mathbb{R}$ is a strictly convex function
  and $\varphi(1)=0$.
Then $g^{(\varphi,p)}=\varphi''(1) g^{(\F,p)}$ on the manifold $\Xi$.
\end{theorem}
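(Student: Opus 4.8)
The plan is to expand the $\varphi$-divergence to second order about the diagonal and to show that only the term carrying $\varphi''(1)$ survives, and that this term is exactly the integral defining the Fisher metric $g^{(\F,p)}$. Fix the base point $(D,\un{u})\in\Xi$, abbreviate $f_{0}=f_{p}(D,\un{u},\cdot)$ and $f_{t,s}=f_{p}(D+tX+sY,\un{u}+t\un{x}+s\un{y},\cdot)$, and set $r(\un{z};t,s)=f_{t,s}(\un{z})/f_{0}(\un{z})$, so that $r=1$ at $t=s=0$ and
\[
H^{(\varphi)}(f_{0},f_{t,s})=\int_{\mathbb{R}^{n}}f_{0}(\un{z})\,
 \varphi\bigl(r(\un{z};t,s)\bigr)\ \dint\un{z}.
\]

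First I would differentiate under the integral sign. The chain rule gives
\[
\frac{\partial^{2}}{\partial s\partial t}H^{(\varphi)}(f_{0},f_{t,s})
 =\int_{\mathbb{R}^{n}}f_{0}\Bigl(\varphi''(r)\,(\partial_{t}r)(\partial_{s}r)
  +\varphi'(r)\,\partial_{s}\partial_{t}r\Bigr)\ \dint\un{z},
\]
and at $t=s=0$ the factors $\varphi''(1)$ and $\varphi'(1)$ come out of the integrals, since $r=1$ there. Because $f_{0}$ does not depend on $t,s$ we have $\partial_{t}r=(\partial_{t}f_{t,s})/f_{0}$, whence $f_{0}\,(\partial_{t}r)(\partial_{s}r)|_{0}=f_{0}\,\dfrac{\partial\log f_{0}}{\partial(X,\un{x})}\dfrac{\partial\log f_{0}}{\partial(Y,\un{y})}$; this is precisely the integrand in the definition of $g^{(\F,p)}$.

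The decisive step is that the $\varphi'(1)$-contribution vanishes. Using $f_{0}\,\partial_{s}\partial_{t}r|_{0}=\partial_{s}\partial_{t}f_{t,s}|_{0}$ together with the fact that each $f_{t,s}$ is a probability density,
\[
\varphi'(1)\int_{\mathbb{R}^{n}}f_{0}\,\partial_{s}\partial_{t}r\Big|_{0}\ \dint\un{z}
 =\varphi'(1)\,\frac{\partial^{2}}{\partial s\partial t}
  \left(\int_{\mathbb{R}^{n}}f_{t,s}\ \dint\un{z}\right)\Bigg|_{0}=0,
\]
since $\int_{\mathbb{R}^{n}}f_{t,s}\,\dint\un{z}\equiv 1$ is constant in $t,s$. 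Collecting the two terms and comparing with the Fisher integral yields $g^{(\varphi,p)}_{D,\un{u}}=\varphi''(1)\,g^{(\F,p)}_{D,\un{u}}$ for every base point and all tangent vectors; note that $\varphi''(1)>0$ by strict convexity, which is consistent with positive definiteness.

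The hard part will be the analytic justification of differentiating under the integral and of the exchange $\int\partial_{s}\partial_{t}f_{t,s}=\partial_{s}\partial_{t}\int f_{t,s}$, which is delicate in the $p>1$ regime where the support $\Dom(p,D,\un{u})$ itself moves with the parameters; there the region in which $f_{t,s}$ and $f_{0}$ have different supports contributes near the boundary, exactly where $f_{0}$ degenerates. I would handle this as in the proof of the Fisher-metric Theorem, passing to the variables $y_{i}=\sqrt{a\lambda_{i}}(z_{i}-u_{i})$ which carry the moving domain onto the fixed ball $B_{n}(1)$; after this substitution the $t,s$-dependence sits only in the integrand on a fixed domain, and a local domination bound (finite precisely for $p<2$, the range in which $g^{(\F,p)}$ exists) legitimizes the interchange of derivative and integral.
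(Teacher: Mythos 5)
Your proposal is correct and follows essentially the same route as the paper: differentiate under the integral, apply the chain rule so that the $\varphi''(1)$ term reproduces exactly the Fisher integrand, and kill the $\varphi'(1)\,\partial_{s}\partial_{t}r$ term via the normalization $\int f_{t,s}=1$ (a step the paper uses silently in passing from the second to the third line of its computation, and which you rightly flag as decisive since only $\varphi(1)=0$, not $\varphi'(1)=0$, is assumed). Your added remarks on justifying the interchange of differentiation and integration in the $p>1$ regime with moving support go beyond what the paper records, but they do not change the argument's structure.
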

\begin{proof}
The computation
\begin{align*}
g^{(\varphi,p)}_{D,\un{u}}((X,\un{x}),(Y,\un{y}))&=\left.\frac{\partial^{2}}{\partial s\partial t}
  H^{(\varphi)}(f_{p}(D,\un{u},\cdot),f_{p}(D+tX+sY,\un{u}+t\un{x}+s\un{y},\cdot))
  \right\vert_{t=s=0}\\
&=\int_{\mathbb{R}^{n}}f_{p}(D,\un{u},\un{z})
  \left\lbrack\left.\frac{\partial^{2}}{\partial s\partial t}
  \varphi\left(\frac{f_{p}(D+tX+sY,\un{u}+t\un{x}+s\un{y},\un{z})}{f_{p}(D,\un{u},\un{z})}\right)
  \right\vert_{t=s=0}\right\rbrack\ \dint\un{z}\\
&=\int_{\mathbb{R}^{n}}\frac{\varphi''(1)}{f_{p}(D,\un{u},\un{z})}
\cdot\left.\frac{\dint f_{p}(D+tX,\un{u}+t\un{x},\un{z})}{\dint t}\right\vert_{t=0}
\cdot\left.\frac{\dint f_{p}(D+sY,\un{u}+s\un{y},\un{z})}{\dint s}\right\vert_{s=0}\ \dint\un{z}\\
&=\varphi''(1)\int_{\mathbb{R}^{n}}\frac{1}{f_{p}(D,\un{u},\un{z})}
 \cdot\frac{\partial f_{p}(D,\un{u},\un{z})}{\partial (X,\un{x})}
 \cdot\frac{\partial f_{p}(D,\un{u},\un{z})}{\partial (Y,\un{y})}  \ \dint\un{z}\\
&=\varphi''(1)g^{(\F,p)}_{D,\un{u}}((X,\un{x}),(Y,\un{y}))
\end{align*}
verifies the Theorem.
\end{proof}

Calvo and Oller studied a different metric on the space $\Xi_{n}$ \cite{CalOll}.
Their starting point was the metric
\begin{equation*}
g:P(n)\times M_{n}\times M_{n}\to\mathbb{R}\quad (D,X,Y)\mapsto \frac{1}{2}\Tr(D^{-1}XD^{-1}Y),
\end{equation*}
  where $P(n)$ denotes the set of $n\times n$ real, symmetric, positive definite matrices,
  and the embedding
\begin{equation*}
\pi:\MN\times\mathbb{R}^{n+1}\times\mathbb{R}^{+}\to P(n)\quad (K,\un{u},\beta)\mapsto
 \begin{pmatrix}
 K+\beta \un{u}\circ\un{u}& \beta\un{u}\\
 \beta\un{u} & \beta\end{pmatrix}.
\end{equation*}
The metric $g$ has been studied by Siegel \cite{Sie}, James \cite{Jam} and Burbea \cite{Bur}.
Calvo and Oller considered the pull-back metric of $g$ by $\pi$ restricted to the manifold
  $\MN\times\mathbb{R}^{n}\times\left\lbrace \beta\right\rbrace$, which is
\begin{equation*}
\tilde{g}_{\pi}\vert_{\MN\times\mathbb{R}^{n}\times\left\lbrace \beta\right\rbrace}
  :(\MN\times\mathbb{R}^{n})\times\T_{n}\times\T_{n}
  \to\mathbb{R}\quad (K,\un{u}),(X,\un{x}),(Y,\un{y})\mapsto
  \frac{1}{2}\Tr(K^{-1}XK^{-1}Y)+\beta\langle\un{x},K^{-1}\un{y}\rangle.
\end{equation*}
If we use our parametrization of the normal distributions, namely the inverse of the
  covariance matrix and the expectation vector, then the metric is
\begin{equation*}
g^{(\SI,\beta)}:\Xi_{n}\times\T_{n}\times\T_{n}
  \to\mathbb{R}\quad (D,\un{u}),(X,\un{x}),(Y,\un{y})\mapsto
  \frac{1}{2}\Tr(D^{-1}XD^{-1}Y)+\beta\langle\un{x},D\un{y}\rangle.
\end{equation*}

Lovri\'c, Min-Oo and Ruh \cite{LovMinRuh} studied a slightly different metric on the
  space $\Xi_{n}$.
Let us sketch their fundamental idea briefly.
Denote by $P_{1}(n)$ the set of $n\times n$ real, symmetric, positive definite matrices
  with determinant $1$.
Then the map
\begin{equation*}
j:\MN\times\mathbb{R}^{n}\to P_{1}(n+1)\quad (K,\un{u})\mapsto (\det K)^{-\frac{2}{n+1}}
  \begin{pmatrix}
  K^{2}+\un{u}\circ\un{u} & \un{u} \\
  \un{u}^{T} & 1 \end{pmatrix}
\end{equation*}
  is a smooth bijection.
The special linear group has a natural smooth group action on $P_{1}(n)$
\begin{equation*}
A:\SL(n)\to\Aut(P_{1}(n))\quad g\mapsto\Bigl(m\mapsto gmg^{T} \Bigr),
\end{equation*}
  where $g^{T}$ denotes the transpose of $g$.
This group action represents $P(n)$ as the Riemannian symmetric space $\SL(n)/\SO(n)$ with
  $\SO(n)$ principal bundle
\begin{equation*}
\SL(n)\to P(n)\quad g\mapsto gg^{T}.
\end{equation*}
This means that the space $\MN\times\mathbb{R}^{n}$ can be considered as a Riemannian
  symmetric space $\SL(n+1)/\SO(n+1)$.
It is known in the theory of symmetric spaces, that the natural $\SL(n+1)$ invariant metric
  on the space $P_{1}(n+1)$ is given by restricting the Killing form of the simple Lie algebra
  $\mathop{\textrm{sl}}\nolimits(n+1)$ to the subspace $\mathfrak{s}_{0}(n+1)$ under
  the Cartan decomposition
  $\mathop{\textrm{sl}}\nolimits(n+1)=\mathfrak{o}(n+1)\oplus\mathfrak{s}_{0}(n+1)$.
The generated metric is unique up to a positive constant factor.
This metric at a point $(K,\un{u})\in\MN\times\mathbb{R}^{n}$ for tangent vectors
  $(X,\un{x}),(Y,\un{y})\in\T_{n}$
  is given by the equation
\begin{equation*}
g^{(\mbox{inv.})}_{K,\un{u}}((X,\un{x}),(Y,\un{y}))
=\Tr\bigl(K^{-1}XK^{-1}Y\bigr)-\frac{1}{n+1}(\Tr K^{-1}X)(\Tr K^{-1}Y)
  +\frac{1}{2}\langle\un{x},K^{-1}\un{y}\rangle.
\end{equation*}
Using the inverse of the covariance matrix as a parameter, this metric is
\begin{equation*}
g^{(\LMR)}_{D,\un{u}}((X,\un{x}),(Y,\un{y}))
=\Tr\bigl(D^{-1}XD^{-1}Y\bigr)-\frac{1}{n+1}(\Tr D^{-1}X)(\Tr D^{-1}Y)
  +\frac{1}{2}\langle\un{x},D\un{y}\rangle.
\end{equation*}

\begin{corollary}
On the parameter space of the special normal distributions we have the equality of the metrics
\begin{equation*}
g^{(\RE)}=g^{(\T,p,1)}=g^{(\F,1)}\vert_{\Xi_{n}^{(s)}}=g^{(\SI,\beta)}\vert_{\Xi_{n}^{(s)}}.
\end{equation*}
On the parameter space of the normal distributions we have
\begin{equation*}
g^{(\F,1)}=g^{(\SI,1)},
\end{equation*}
  but the metrics $g^{(\F,p)}$, $g^{(\SI,\beta)}$ and $g^{(\LMR)}$ are pairwise incomparable
  in the sense that there is no $(n,p)\in\mathcal{N}$ parameter such that two of these metrics
  are equal up to a multiplicative factor.
\end{corollary}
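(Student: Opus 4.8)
The plan is to separate the statement into the chain of equalities and the incomparability assertion, treating the former as a direct read-off from the formulas already computed and concentrating the real effort on the latter. First I would dispatch the equalities by specialization. On $\Xi_{n}^{(s)}$ the tangent vectors carry no expectation component, so one sets $\un{x}=\un{y}=\un{0}$ throughout. The theorem giving $g^{(\RE)}_{D}(X,Y)=\tfrac12\Tr(D^{-1}XD^{-1}Y)$ provides one term of the chain; the $q=1$ case of the Tsallis theorem gives the same value for $g^{(\T,p,1)}$; setting $p=1$ in the Fisher theorem makes the coefficient $\tfrac{p-1}{4(2-p)}$ of the $\Tr(D^{-1}X)\Tr(D^{-1}Y)$ term vanish and leaves $\tfrac12\Tr(D^{-1}XD^{-1}Y)$ on the matrix part; and $g^{(\SI,\beta)}$ restricted to the matrix part is $\tfrac12\Tr(D^{-1}XD^{-1}Y)$ for every $\beta$. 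All four agree, proving the first chain. For the second equality I substitute $p=1$ into the full Fisher formula, where the three coefficients become $\tfrac12$, $0$ and $\tfrac{2}{2}=1$, yielding $\tfrac12\Tr(D^{-1}XD^{-1}Y)+\langle\un{x},D\un{y}\rangle$, which is exactly $g^{(\SI,1)}$.

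For the incomparability I would cast all three metrics into the common shape
\[
a\,\Tr(D^{-1}XD^{-1}Y)+b\,\Tr(D^{-1}X)\Tr(D^{-1}Y)+c\,\langle\un{x},D\un{y}\rangle,
\]
reading off the triples $(a,b,c)$: for $g^{(\F,p)}$ it is $\bigl(\tfrac{1}{2(2-p)},\tfrac{p-1}{4(2-p)},\tfrac{2+n(p-1)}{(2p+n(p-1))(2-p)}\bigr)$, for $g^{(\SI,\beta)}$ it is $(\tfrac12,0,\beta)$, and for $g^{(\LMR)}$ it is $(1,-\tfrac{1}{n+1},\tfrac12)$. Two metrics are equal up to a multiplicative factor precisely when their triples are proportional, and for $n\ge 2$ the three generating bilinear forms are linearly independent: at a generic diagonal $D$ the forms $\Tr(D^{-1}XD^{-1}Y)$ and $\Tr(D^{-1}X)\Tr(D^{-1}Y)$ are distinct, while $\langle\un{x},D\un{y}\rangle$ acts on a disjoint block of coordinates. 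Hence proportionality of metrics is equivalent to proportionality of triples, and it suffices to compare the scale-invariant ratio $b/a$, which equals $\tfrac{p-1}{2}$ for $g^{(\F,p)}$, $0$ for $g^{(\SI,\beta)}$, and $-\tfrac{1}{n+1}$ for $g^{(\LMR)}$.

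This single invariant settles every pair. The pair $g^{(\SI,\beta)}$, $g^{(\LMR)}$ would force $0=-\tfrac{1}{n+1}$, impossible; the pair $g^{(\F,p)}$, $g^{(\LMR)}$ would force $\tfrac{p-1}{2}=-\tfrac{1}{n+1}$, that is $p=\tfrac{n-1}{n+1}$, and here the crux is the elementary inequality $\tfrac{n-1}{n+1}<\tfrac{n}{n+2}$ (equivalently $-2<0$ after cross-multiplying), which places this value of $p$ below the threshold $\tfrac{n}{n+2}$ and hence outside $\mathcal{N}$; and the pair $g^{(\F,p)}$, $g^{(\SI,\beta)}$ would force $\tfrac{p-1}{2}=0$, i.e. $p=1$, the one coincidence already recorded as $g^{(\F,1)}=g^{(\SI,1)}$.

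I expect the main obstacle to lie not in any deep computation but in the careful bookkeeping of two points. The first is the linear independence of the three generating forms, on which the reduction from metric-proportionality to triple-proportionality rests; this is exactly where the low-dimensional case intervenes, since for $n=1$ the forms $\Tr(D^{-1}XD^{-1}Y)$ and $\Tr(D^{-1}X)\Tr(D^{-1}Y)$ collapse into one another, the $b/a$ invariant becomes unavailable, and one must instead compare $c/a$ directly, which reveals that the clean statement genuinely belongs to the range $n\ge 2$ (together with $p\ne 1$). The second is simply the boundary inequality $\tfrac{n-1}{n+1}<\tfrac{n}{n+2}$ that excludes the dangerous exponent from $\mathcal{N}$. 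Establishing these two facts is what carries the whole argument.
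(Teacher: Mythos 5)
Your proposal is correct and takes essentially the same route as the paper, which offers no separate argument for this corollary: the equalities are read off from the metric formulas established in the preceding theorems, and the incomparability is settled by comparing the coefficient triples of $\Tr(D^{-1}XD^{-1}Y)$, $\Tr(D^{-1}X)\Tr(D^{-1}Y)$ and $\langle\un{x},D\un{y}\rangle$, exactly as you do. Your attention to the edge cases is justified rather than pedantic: since $(n,1)\in\mathcal{N}$, the coincidence $g^{(\F,1)}=g^{(\SI,1)}$ already sits uneasily with the literal ``no $(n,p)\in\mathcal{N}$'' wording, and for $n=1$ the incomparability genuinely fails (there $g^{(\LMR)}=g^{(\SI,1/2)}$, and $g^{(\F,p)}$ is proportional to $g^{(\SI,2/(3p-1))}$ for every admissible $p$), so the clean statement holds exactly in the range $n\ge 2$, $p\ne 1$ that you identify.
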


To work with the Riemannian metrics $g^{(\RE)}$, $g^{(\EN,1,q)}$, $g^{(\F,q)}$, $g^{(\varphi,q)}$,
  $g^{(\SI,\beta)}$ and $g^{(\LMR)}$ simultaneously we consider the metric
\begin{equation*}
g_{D,\un{u}}((X,\un{x}),(Y,\un{y}))=\frac{1}{2}\Tr\bigl(D^{-1}XD^{-1}Y\bigr)
  +\alpha(\Tr D^{-1}X)(\Tr D^{-1}Y)+\beta\langle\un{x},D\un{y}\rangle
\end{equation*}
  with parameters  $\alpha,\beta\in\mathbb{R}$, $\alpha\neq-\frac{1}{2n}$, $\beta\neq 0$ on
  the manifold $\Xi_{n}$.
In the $\beta=0$ case we restrict the manifold to $\Xi_{n}^{(s)}$.
For every $D\in\Xi_{n}^{(s)}$ and $X,Y\in\T_{n}$ we have a Cauchy--Schwarz inequality
\begin{equation*}
\Tr^{2}\bigl(D^{-1}XD^{-1}Y\bigr)\leq \Tr\bigl(D^{-1}XD^{-1}X\bigr)\Tr\bigl(D^{-1}YD^{-1}Y\bigr).
\end{equation*}
Substituting $Y=D$ we have
\begin{equation*}
\frac{1}{2}\Tr\bigl(D^{-1}XD^{-1}X\bigr)-\frac{1}{2n}(\Tr D^{-1}X)^{2}\geq 0.
\end{equation*}
It means that if $\alpha>-\frac{1}{2n}$ then $g$ is a Riemannian metric,
  if $\alpha<-\frac{1}{2n}$ then $g$ is a semi-Riemannian metric, and in the
  $\alpha=-\frac{1}{2n}$ case $g$ is a degenerated quadratic form.
The Theorems and proofs are valid for semi-Riemannian metrics too,
  so we have just one condition $\alpha\neq-\frac{1}{2n}$.

\section{Geodesics}

In this section we derive the differential equation of the geodesic lines in the space $M_{n}$
  and we present some solutions.

\begin{theorem}
A curve $\gamma:\mathbb{R}\to\Xi_{n}$, $\gamma(t)=(D(t),\un{u}(t))$ is a geodesic curve if
  and only if for every $t\in\Dom\gamma$
\begin{align}
\label{eq:geodesic line}
\ddot D(t)&=\dot D(t)D(t)^{-1}\dot D(t)+\beta (D(t)\dot{\un{u}}(t))\circ (D(t)\dot{\un{u}}(t))
  -\frac{2\alpha\beta}{1+2n\alpha}\langle \dot{\un{u}}(t),D(t)\dot{\un{u}}(t)\rangle D(t)\\
\ddot{\un{u}}(t)&=-D(t)^{-1}\dot D(t)\dot{\un{u}}(t)\nonumber
\end{align}
holds.
\end{theorem}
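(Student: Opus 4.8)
The plan is to derive the geodesic equations from the Euler–Lagrange equations of the energy functional associated with the metric $g$. Writing the Lagrangian as
\[
L(t)=g_{D(t),\un{u}(t)}\bigl((\dot D(t),\dot{\un{u}}(t)),(\dot D(t),\dot{\un{u}}(t))\bigr)
 =\frac{1}{2}\Tr\bigl(D^{-1}\dot D D^{-1}\dot D\bigr)
  +\alpha(\Tr D^{-1}\dot D)^{2}+\beta\langle\dot{\un{u}},D\dot{\un{u}}\rangle,
\]
a curve $\gamma=(D,\un{u})$ is a geodesic if and only if it is a critical point of $\int L\,\dint t$, i.e.\ it satisfies the Euler–Lagrange equations separately in the matrix variable $D$ and the vector variable $\un{u}$. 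I would treat $D$ as ranging over the symmetric matrices $\TMN$ and $\un{u}$ over $\mathbb{R}^{n}$, and compute the variations using the derivation formula (\ref{eq:deriv_def}).

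First I would handle the $\un{u}$-equation, which is the simpler of the two. Since $\un{u}$ appears in $L$ only through the term $\beta\langle\dot{\un{u}},D\dot{\un{u}}\rangle$, its Euler–Lagrange equation is
\[
\frac{\dint}{\dint t}\frac{\partial L}{\partial\dot{\un{u}}}-\frac{\partial L}{\partial\un{u}}
 =\frac{\dint}{\dint t}\bigl(2\beta D\dot{\un{u}}\bigr)=0,
\]
because $L$ has no explicit $\un{u}$-dependence. Expanding $\frac{\dint}{\dint t}(D\dot{\un{u}})=\dot D\dot{\un{u}}+D\ddot{\un{u}}=0$ and solving for $\ddot{\un{u}}$ gives $\ddot{\un{u}}=-D^{-1}\dot D\dot{\un{u}}$, which is exactly the second equation. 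The factor $\beta$ drops out, which is why $\beta\neq0$ is needed only to keep the metric nondegenerate rather than to derive this relation.

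Next I would handle the $D$-equation, which is the main computation. Taking the variation $D\mapsto D+sX$ with $X\in\TMN$, I need the $s$-derivatives at $s=0$ of each term in $L$. The key auxiliary facts are $\frac{\dint}{\dint s}D^{-1}=-D^{-1}XD^{-1}$ and $\frac{\dint}{\dint s}\Tr D^{-1}\dot D=-\Tr(D^{-1}XD^{-1}\dot D)$, which I can obtain from the integral representation of the matrix logarithm exactly as in the proof of the $q$-R\'enyi entropy metric Theorem. The Euler–Lagrange equation $\frac{\dint}{\dint t}\frac{\partial L}{\partial\dot D}=\frac{\partial L}{\partial D}$ then becomes a matrix identity; collecting the term $\dot D D^{-1}\dot D$ from the Christoffel part of the $\frac{1}{2}\Tr$-metric, the rank-one term $\beta(D\dot{\un{u}})\circ(D\dot{\un{u}})$ from differentiating $\beta\langle\dot{\un{u}},D\dot{\un{u}}\rangle$ in $D$, and the isotropic correction $-\frac{2\alpha\beta}{1+2n\alpha}\langle\dot{\un{u}},D\dot{\un{u}}\rangle D$ from the coupling of the $\alpha(\Tr D^{-1}\dot D)^{2}$ term with the vector part, yields the first equation.

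The main obstacle is bookkeeping in the $D$-equation: the $\alpha$-term is quadratic in $\Tr D^{-1}\dot D$, so its contribution mixes a scalar factor with the matrix $D^{-1}$, and the denominator $1+2n\alpha$ arises precisely from inverting the $(1,1)$-type operator $X\mapsto\frac{1}{2}D^{-1}XD^{-1}+\alpha(\Tr D^{-1}X)D^{-1}$ that appears when one solves for $\ddot D$ rather than for $\partial L/\partial\dot D$. I would isolate this by computing $\frac{\partial L}{\partial\dot D}$, differentiating in $t$, subtracting $\frac{\partial L}{\partial D}$, and then applying the inverse of that operator — whose determinant produces the factor $1+2n\alpha$, consistent with the standing assumption $\alpha\neq-\frac{1}{2n}$ under which the metric is nondegenerate. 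Symmetrizing $X$ and matching coefficients of the independent tensor structures ($\dot D D^{-1}\dot D$, the rank-one dyad, and $D$ itself) then gives the stated form, completing the proof.
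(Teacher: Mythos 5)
Your proof is correct, and it takes a genuinely different route from the paper's. The paper computes the Levi-Civita connection explicitly: it sets up the Koszul identity for $g_{D,\un{u}}\bigl(\Gamma_{(D,\un{u})}(X,\un{x})(Y,\un{y}),(Z,\un{z})\bigr)$, makes the ansatz $\Gamma_{(D,\un{u})}(X,\un{x})(Y,\un{y})=\bigl(-\frac{1}{2}(XD^{-1}Y+YD^{-1}X)+DW,\,D^{-1}\un{w}\bigr)$, solves for $W$ and $\un{w}$, and then substitutes the resulting covariant derivative (\ref{eq:covariant derivative}) into $\ddot\gamma+\Gamma_{\gamma}(\dot\gamma)(\dot\gamma)=0$. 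You instead invoke the variational characterization of geodesics as critical points of the energy functional and derive the Euler--Lagrange equations in $D$ and $\un{u}$ separately; this is legitimate, also in the semi-Riemannian case $\alpha<-\frac{1}{2n}$ allowed by the paper's standing assumptions, and your $\un{u}$-equation is immediate since $L$ has no explicit $\un{u}$-dependence. Your mechanism for the denominator $1+2n\alpha$ is essentially right but stated loosely (it is not literally a determinant): concretely, the $D$-equation comes out \emph{implicit} in $\ddot D$,
\begin{equation*}
\ddot D-\dot DD^{-1}\dot D-\beta(D\dot{\un{u}})\circ(D\dot{\un{u}})
  +2\alpha\,\Tr\bigl(D^{-1}\ddot D-D^{-1}\dot DD^{-1}\dot D\bigr)\,D=0,
\end{equation*}
because $\ddot D$ appears both alone and inside the trace; pairing this identity with $\Tr(D^{-1}\,\cdot\,)$ gives $(1+2n\alpha)\Tr\bigl(D^{-1}\ddot D-D^{-1}\dot DD^{-1}\dot D\bigr)=\beta\langle\dot{\un{u}},D\dot{\un{u}}\rangle$, which is solvable precisely when $\alpha\neq-\frac{1}{2n}$, and substituting the solved trace back yields the stated equation. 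As for what each approach buys: yours is more economical if one only wants the geodesic equations, since the energy functional only ever sees the connection evaluated on the diagonal $(\dot\gamma,\dot\gamma)$ and you never need $\Gamma$ on general pairs of tangent vectors; the paper pays the extra cost of the full polarized formula for $\Gamma$, but then reuses exactly that formula in the following section to compute the Riemann and Ricci curvature tensors, which your route by itself would not supply.
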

\begin{proof}
Denote by $\GL(n)$ the set of invertible $n\times n$ matrices and define the reciprocal function as
\begin{equation*}
i:\GL(n)\to \GL(n)\qquad D\mapsto D^{-1}.
\end{equation*}
At the point $D$ the tangent space $\T_{D}\GL(n)$ can be identified with the set of $n\times n$
  matrices $\Mat(n)$.
The derivative of the inversion function is
\begin{equation*}
\dint i:\GL(n)\to\Lin(\Mat(n),\Mat(n))\qquad (D)\mapsto (\dint i)(D)\ = \
\Bigl(A\mapsto (\dint i)(D)(A)=-D^{-1}AD^{-1} \Bigr).
\end{equation*}
This leads to the derivative of the metric
\begin{align*}
\dint g&:\Xi_{n}\to\Lin(\T_{n},\Lin(\T_{n}\times\T_{n},\mathbb{R}))\\
&(D,\un{u})\mapsto\Bigl( (Z,\un{z})\mapsto\bigl( ((X,\un{x}),(Y,\un{y}))\mapsto
 \dint g_{D,\un{u}}(Z,\un{z})((X,\un{x}),(Y,\un{y}))   \bigr)   \Bigr),
\end{align*}
where
\begin{align*}
\dint g_{D,\un{u}}(Z,\un{z})((X,\un{x}),(Y,\un{y}))&=-\frac{1}{2}\Tr D^{-1}(ZD^{-1}X+XD^{-1}Z)
  D^{-1}Y +\beta\langle\un{x},D\un{y}\rangle\\
&-\alpha\Tr(D^{-1}ZD^{-1}X)\Tr(D^{-1}Y)
 -\alpha\Tr(D^{-1}X)\Tr(D^{-1}ZD^{-1}Y).
\end{align*}
At a given point $(D,\un{u})\in\Xi_{n}$ for given tangent vectors
  $(X,\un{x}),(Y,\un{y})\in\T_{n}$ the map
\begin{align*}
\tau&_{(D,\un{u}),(X,\un{x}),(Y,\un{y})}:\T_{n}\to\mathbb{R}\\
&(Z,\un{z})\mapsto\frac{1}{2}\Bigl(
  \dint g_{D,\un{u}}(Y,\un{y})((X,\un{x}),(Z,\un{z}))+
  \dint g_{D,\un{u}}(X,\un{x})((Y,\un{y}),(Z,\un{z}))-
  \dint g_{D,\un{u}}(Z,\un{z})((X,\un{x}),(Y,\un{y}))  \Bigr)
\end{align*}
  is a linear functional.
It means that there exists a unique tangent vector $V_{(D,\un{u}),(X,\un{x}),(Y,\un{y})}\in\T_{n}$
  such that for all vectors $(Z,\un{z})\in\T_{n}$
\begin{equation*}
g_{D,\un{u}}(V_{(D,\un{u}),(X,\un{x}),(Y,\un{y})},(Z,\un{z}))=
\tau_{(D,\un{u}),(X,\un{x}),(Y,\un{y})}(Z,\un{z})
\end{equation*}
  holds.
One can define the map
\begin{equation*}
\Gamma:\Xi_{n}\to\Lin(\T_{n}\times\T_{n},\T_{n})\qquad (D,\un{u})\mapsto
\Bigl( ((X,\un{x}),(Y,\un{y}))\mapsto V_{(D,\un{u}),(X,\un{x}),(Y,\un{y})} \Bigr)
\end{equation*}
  which is called covariant derivative.
It means, that the equation for all tangent vectors $(Z,\un{z})\in\T_{n}$
\begin{align}
\label{eq:covariant derivarive find 1}
g_{D,\un{u}}(\Gamma_{(D,\un{u})}(X,\un{x})(Y,\un{y}),(Z,\un{z}))=
  &-\frac{1}{4}\Tr\Bigl(D^{-1}(XD^{-1}Y+YD^{-1}X)D^{-1}Z \Bigr)\\
  &-\frac{\alpha}{2}\Tr(D^{-1}(XD^{-1}Y+YD^{-1}Z))\Tr(D^{-1}Z)\nonumber\\
  &+\frac{\beta}{2}(\langle\un{y},X\un{z}\rangle+\langle\un{x},Y\un{z}\rangle
  -\langle\un{x},Z\un{y}\rangle )\nonumber
\end{align}
determines the covariant derivative.
Let us write the covariant derivative in the form of
\begin{equation*}
\Gamma_{(D,\un{u})}(X,\un{x})(Y,\un{y})=\left(
  -\frac{1}{2}(XD^{-1}Y+YD^{-1}X)+DW,D^{-1}\un{w}\right)
\end{equation*}
  for some $W\in\T_{n}$ and $\un{w}\in\mathbb{R}^{n}$.
Then Equation (\ref{eq:covariant derivarive find 1}) is
\begin{equation}
\label{eq:geodesic line proof01}
\frac{1}{2}\Tr(WD^{-1}Z)+\alpha\Tr W\Tr D^{-1}Z+\beta\langle\un{w},\un{z}\rangle=
  \frac{\beta}{2}(\langle\un{y},X\un{z}\rangle+\langle\un{x},Y\un{z}\rangle
  -\langle\un{x},Z\un{y}\rangle ).
\end{equation}
Let us introduce the notation $\odot$ for symmetrized diadic product, for vectors
  $\un{u},\un{v}\in\mathbb{R}^{n}$
\begin{equation*}
\un{u}\odot\un{v}=\un{u}\circ\un{v}+\un{v}\circ\un{u},
\end{equation*}
  that is, the components of the $n\times n$ matrix $(\un{u}\odot\un{v})$ are
  $(\un{u}\odot\un{v})_{ij}=\un{u}_{i}\un{v}_{j}+\un{u}_{j}\un{v}_{i}$.
Equation (\ref{eq:geodesic line proof01}) means that the vector component of the covariant derivative is
\begin{equation*}
\un{w}=\frac{1}{2}(X\un{y}+Y\un{x})
\end{equation*}
  and the remaining matrix part is
\begin{equation*}
W=-\frac{\beta}{2}(\un{x}\odot\un{y})D+\gamma E
\end{equation*}
  where the parameter $\gamma$ can easily be found.
Combining the terms together we have the following expression for the covariant derivative.
\begin{equation}
\label{eq:covariant derivative}
\Gamma_{(D,\un{u})}(X,\un{x})(Y,\un{y})\hskip-1pt=\hskip-3pt\left(\hskip-3pt
  -\frac{1}{2}(XD^{-1}Y+YD^{-1}X)
  \hskip-2pt-\hskip-2pt\frac{\beta}{2}D(\un{x}\odot\un{y})D
  \hskip-2pt+\hskip-2pt\frac{2\alpha\beta}{1+2n\alpha}\langle\un{x},D\un{y}\rangle D,
  \frac{1}{2}D^{-1}(X\un{y}+Y\un{x})\right)
\end{equation}
A curve $\gamma:\mathbb{R}\to\Xi$ is called a geodesic curve if
\begin{equation*}
\forall t\in\Dom(\gamma):\quad \ddot\gamma(t)+\Gamma_{\gamma(t))}(\dot\gamma(t))(\dot\gamma(t))=0
\end{equation*}
  holds.
Consider a curve $\gamma:\mathbb{R}\to\Xi$, $\gamma(t)=(D(t),\un{u}(t))$,
  substitute it into the equation of the geodesic curve and expand the covariant derivative, then
  according to Equation (\ref{eq:covariant derivative}) we get Equation (\ref{eq:geodesic line})
  of the Theorem.
\end{proof}

We have some remarks about the geodesic curves, which are only valid  for Riemannian metrics.

\begin{remark}
Let us consider the case $n=1$, and assume that $\alpha\neq-\frac{1}{2n}$.
Then the system of differential equations of the geodesic line is
\begin{equation*}
\ddot D(t)=\frac{\dot D(t)^{2}}{D(t)}+\frac{\beta}{1+2\alpha}D(t)^{2}\dot u(t)^{2}\qquad
\ddot u(t)=-\frac{\dot D(t)}{D(t)}\dot u(t).
\end{equation*}
The curve $\gamma:\mathbb{R}\to\Xi_{1}$
\begin{equation*}
\gamma(t)=\left(\frac{2}{a^{2}}\cosh^{2}(bt+c),a\sqrt{\frac{1+2\alpha}{\beta}}\tanh(ct+b)+d\right)
\end{equation*}
is a geodesic line.

Assume that we have two points $(D_{0},\un{u}_{0}),(D_{1},\un{u}_{1})$ in the space $\Xi_{1}$
  and assume that $\un{u}_{1}>\un{u}_{0}$.
Let us define the following quantities
\begin{align*}
x&=(\un{u}_{1}-\un{u}_{0})\sqrt{\frac{D_{0}\beta}{2+4\alpha}}
\qquad y=\sqrt{\frac{D_{1}}{D_{0}}}
\qquad c=\log\left(
  \frac{\sqrt{(x^{2}y^{2}+1-y^{2})^{2}+4x^{2}y^{4}}-(x^{2}y^{2}+1-y^{2})}{2xy^{2}}\right)\\
a&=\sqrt{\frac{2}{D_{0}}}\cosh c
\qquad b=-\log\left(y(1-x\ce^{c}) \right)
\qquad d=\un{u}_{0}-a\sqrt{\frac{1+2\alpha}{\beta}}\tanh c.
\end{align*}
Then the curve $\gamma:\left\lbrack 0,1\right\rbrack\to\Xi_{1}$
\begin{equation*}
\gamma(t)=\left(\frac{2}{a^{2}}\cosh^{2}(bt+c),a\sqrt{\frac{1+2\alpha}{\beta}}\tanh(bt+c)+d\right)
\end{equation*}
is a geodesic line, such that $\gamma(0)=(\un{u}_{0},D_{0})$ and $\gamma(1)=(\un{u}_{1},D_{1})$.
Simple calculation shows that the distance between the points
  $(\un{u}_{0},D_{0}),(\un{u}_{1},D_{1})\in\Xi_{1}$ is
\begin{align*}
d\Bigl((\un{u}_{0},D_{0}),(\un{u}_{1},D_{1})\Bigr)=\int_{0}^{1}
  \sqrt{g_{\gamma(t)}(\dot\gamma(t),\dot\gamma(t))}\ \dint t
 =\sqrt{2+4\alpha}\vert b\vert.
\end{align*}
\end{remark}

The geodesic line and the Rao distance on the space of special normal distributions has been
  computed by Siegel \cite{Sie} and Burbea \cite{Bur}.
The next Remark concerns their results.

\begin{remark}
Let us consider the space of $n$ dimensional special normal distributions $\Xi^{(s)}_{n}$
  and assume that $\alpha\neq-\frac{1}{2n}$.
Then the curve $D:\mathbb{R}\to\Xi^{(s)}_{n}$ is a geodesic line if and only if
\begin{equation*}
\ddot D(t)=\dot{D}(t)D(t)^{-1}\dot{D}(t).
\end{equation*}
Assume that we have two points $D_{0},D_{1}$ in the space $\Xi^{(s)}_{n}$.
Then the curve
\begin{equation}
\label{eq:Siegel geodesic line}
\gamma:\left\lbrack 0,1\right\rbrack\to\Xi \qquad  t\mapsto  D_{0}^{\frac{1}{2}}
  \exp\left(t\log\left(D_{0}^{-\frac{1}{2}}D_{1}D_{0}^{-\frac{1}{2}}\right)\right)
  D_{0}^{\frac{1}{2}}
\end{equation}
is a geodesic line, such that $\gamma(0)=D_{0}$ and $\gamma(1)=D_{1}$.
The distance between the points $D_{0},D_{1}\in\Xi^{(s)}_{n}$ is
\begin{align*}
d(D_{0},D_{1})
  &=\int_{0}^{1}\sqrt{g_{\gamma(t)}(\dot\gamma(t),\dot\gamma(t))}\ \dint t
  =\sqrt{\frac{1}{2}\Tr\log^{2}\left(D_{0}^{-\frac{1}{2}}D_{1}D_{0}^{-\frac{1}{2}}\right)
  +\alpha\Tr^{2}\log\left(D_{0}^{-\frac{1}{2}}D_{1}D_{0}^{-\frac{1}{2}}\right)
  }.
\end{align*}
\end{remark}

\begin{remark}
In the $\alpha=0$ case the metric is the pull-back of the Siegel metric by the embedding
\begin{equation*}
\pi_{\beta}:\Xi_{n}\to P(n+1)\quad (D,\un{u})\mapsto
  \begin{pmatrix}
  D^{-1}+\beta \un{u}\circ\un{u}& \beta\un{u}\\
  \beta\un{u} & \beta\end{pmatrix}.
\end{equation*}
The equation of the geodesic line in the Siegel metric is given by Equation
  (\ref{eq:Siegel geodesic line}).
If we have two points $(D_{0},\un{u}_{0}),(D_{1},\un{u}_{1})\in\Xi_{n}$ then we define the
  matrices $S_{i}=\pi_{\beta}(D_{i},\un{u}_{i})$ ($i=0,1$), the equation of the geodesic line is
\begin{equation*}
\gamma:\left\lbrack 0,1\right\rbrack\to\Xi \qquad  t\mapsto \pi_{\beta}^{-1}\left\lbrack
  S_{0}^{\frac{1}{2}}\exp\left(t\log\left(S_{0}^{-\frac{1}{2}}S_{1}S_{0}^{-\frac{1}{2}}
  \right)\right)S_{0}^{\frac{1}{2}}\right\rbrack
\end{equation*}
  and the distance between the points is
\begin{equation*}
d((D_{0}\un{u_{0}}),(D_{1},\un{u_{1}}))
    =\sqrt{\frac{1}{2}\Tr\log^{2}\left(S_{0}^{-\frac{1}{2}}S_{1}S_{0}^{-\frac{1}{2}}\right)}.
\end{equation*}

\end{remark}

\begin{remark}
In the $\alpha=0$ case we can more exact parametrization of the geodesic line in some special
  cases.
Assume that $B,C\in\MN$ are diagonal matrices,
  $\un{A},\un{D}\in\mathbb{R}^{n}$ are vectors such that the components of $\un{A}$ are equal
  and $U$ is an $n\times n$ orthogonal matrix such that $U\un{A}=\un{A}$.
Then the curve
\begin{equation*}
\gamma:\mathbb{R}^{+}\to\Xi\qquad
  t\mapsto\left(\frac{2}{\Vert \un{A}\Vert^{2}}U\cosh^{2}(Bt+C)U^{-1},
  \sqrt{\frac{n}{\beta}}U\tanh(Bt+C)\un{A}+\un{D} \right)
\end{equation*}
is a geodesic line.
The distance between the points $\gamma(t_{0})$ and $\gamma(t_{1})$
  ($t_{0},t_{1}\in\mathbb{R}^{+}$) is
\begin{align*}
d(\gamma(t_{0}),\gamma(t_{1}))&=\int_{t_{0}}^{t_{1}}
  \sqrt{g_{\gamma(t)}(\dot\gamma(t),\dot\gamma(t))}\ \dint t
  =\vert t_{1}-t_{0}\vert \sqrt{2\Tr B^{2}}.
\end{align*}
\end{remark}

\section{Curvatures}

Since Efron clarified the statistical meaning of the curvature, different curvature tensors
  has been studied on statistical manifolds.
For curvatures on the space of normal distributions  see for example Amari \cite{Ama2,Ama1},
  Siegel \cite{Sie}, Burbea \cite{Bur}, Skovgaard \cite{Sko}, Calvo and Oller \cite{CalOll},
  Lovri\'c Min-Oo and Ruh \cite{LovMinRuh}.

\begin{theorem}
For every point $(D,\un{u})\in\Xi_{n}$ and for every tangent vectors
  $(X,\un{x}),(Y,\un{y}),(Z,\un{z})\in\T_{n}$ the Riemann curvature tensor is
\begin{align}
&R_{(D,\un{u})}((X,\un{x}),(Y,\un{y}),(Z,\un{z}))\hskip-2pt=\hskip-4pt\Bigg\lbrack\hskip-2pt
\frac{1}{4}\Bigl(ZD^{-1}XD^{-1}Y+YD^{-1}XD^{-1}Z-XD^{-1}YD^{-1}Z-ZD^{-1}YD^{-1}X\Bigr)\nonumber\\
&+\hskip-2pt\frac{\beta}{4}\Bigl(Y\un{x}\odot D\un{z}-X\un{y}\odot D\un{z}
  +D\un{y}\odot Z\un{x}-D\un{x}\odot Z\un{y}\Bigr)
  \hskip-2pt+\hskip-2pt\frac{\alpha\beta}{1+2n\alpha}\Bigl(\langle X\un{y}-Y\un{x},\un{z}\rangle
  +\langle \un{x},Z\un{y}\rangle-\langle\un{y},Z\un{x}\rangle \Bigr)D,  \nonumber\\
& \frac{1}{4}\Bigl(D^{-1}(YD^{-1}X-XD^{-1}Y)\un{z}+D^{-1}ZD^{-1}(X\un{y}-Y\un{x})\Bigr)
 +\frac{\beta}{4}\Bigl(\un{z}\odot\un{x})D\un{y}-(\un{z}\odot\un{y})D\un{x}\Bigr)    \nonumber\\
&+\frac{\alpha\beta}{1+2n\alpha}\Bigl(\langle\un{y},D\un{z}\rangle\un{x}
  -\langle\un{x},D\un{z}\rangle\un{y} \Bigr)\Bigg\rbrack.
\label{eq:Riemann curvature tensor}
\end{align}
\end{theorem}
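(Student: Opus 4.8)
The plan is to compute $R$ directly from the covariant derivative $\Gamma$ of Equation~(\ref{eq:covariant derivative}). Since $\Xi_{n}$ is an open subset of the vector space $\TMN\times\mathbb{R}^{n}$, it carries global affine coordinates, and I would extend the tangent vectors $(X,\un{x}),(Y,\un{y}),(Z,\un{z})$ to the constant vector fields determined by this linear structure. Constant fields commute, so the bracket term of the curvature drops out and
\begin{align*}
R_{(D,\un{u})}((X,\un{x}),(Y,\un{y}),(Z,\un{z}))=&
 \frac{\partial}{\partial(X,\un{x})}\Bigl[\Gamma_{(D,\un{u})}(Y,\un{y})(Z,\un{z})\Bigr]
 -\frac{\partial}{\partial(Y,\un{y})}\Bigl[\Gamma_{(D,\un{u})}(X,\un{x})(Z,\un{z})\Bigr]\\
 &+\Gamma_{(D,\un{u})}(X,\un{x})\Bigl(\Gamma_{(D,\un{u})}(Y,\un{y})(Z,\un{z})\Bigr)
 -\Gamma_{(D,\un{u})}(Y,\un{y})\Bigl(\Gamma_{(D,\un{u})}(X,\un{x})(Z,\un{z})\Bigr),
\end{align*}
where $\frac{\partial}{\partial(X,\un{x})}$ is the directional derivative of Equation~(\ref{eq:deriv_def}) applied componentwise to the $\T_{n}$-valued map $\Gamma$.

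For the first two terms I would differentiate $\Gamma$; its only base-point dependence is through $D$, so I use $\frac{\partial}{\partial(X,\un{x})}D=X$ together with the inversion rule $\frac{\partial}{\partial(X,\un{x})}D^{-1}=-D^{-1}XD^{-1}$ already recorded in the geodesic computation. Applying this term by term to the matrix component $-\frac{1}{2}(YD^{-1}Z+ZD^{-1}Y)-\frac{\beta}{2}D(\un{y}\odot\un{z})D+\frac{2\alpha\beta}{1+2n\alpha}\langle\un{y},D\un{z}\rangle D$ and to the vector component $\frac{1}{2}D^{-1}(Y\un{z}+Z\un{y})$ yields the antisymmetrized derivative contribution. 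For the last two terms I would substitute $\Gamma_{(D,\un{u})}(Y,\un{y})(Z,\un{z})=(W,\un{w})$ back into the second slot of Equation~(\ref{eq:covariant derivative}); the key structural point is that the vector part $\un{w}$ then feeds the $\beta$ and $\alpha\beta$ matrix terms while the matrix part $W$ feeds the vector term $\frac{1}{2}D^{-1}(X\un{w}+W\un{x})$, so that the two sectors mix. As a consistency check, the purely matrix sector of the two contributions combines to $\frac{1}{4}\bigl(ZD^{-1}XD^{-1}Y+YD^{-1}XD^{-1}Z-XD^{-1}YD^{-1}Z-ZD^{-1}YD^{-1}X\bigr)$, matching the leading line of the stated formula.

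Finally I would collect all terms by type, grouping the $\beta$-linear pieces into $\frac{\beta}{4}\bigl(Y\un{x}\odot D\un{z}-X\un{y}\odot D\un{z}+D\un{y}\odot Z\un{x}-D\un{x}\odot Z\un{y}\bigr)$ for the matrix component and $\frac{\beta}{4}\bigl((\un{z}\odot\un{x})D\un{y}-(\un{z}\odot\un{y})D\un{x}\bigr)$ for the vector component, and the $\alpha$-dependent pieces into the single combination $\frac{\alpha\beta}{1+2n\alpha}$ shown in the statement. The main obstacle is precisely this bookkeeping: the composition $\Gamma\circ\Gamma$ generates a large number of cross-sector monomials, and one must verify that every contribution carrying a pure $\alpha$ factor (with no $\beta$) cancels, that the $\beta^{2}$ terms cancel, and that the surviving $\beta$ and $\alpha\beta/(1+2n\alpha)$ terms assemble exactly into the symmetrized dyadic expressions above. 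Organizing the monomials by their matrix/vector valency and by powers of $\beta$ is what keeps this computation tractable.
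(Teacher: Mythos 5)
Your proposal is correct and follows essentially the same route as the paper: the paper likewise computes the directional derivative $\dint\Gamma$ of the covariant derivative (using $\frac{\partial}{\partial (Z,\un z)}D^{-1}=-D^{-1}ZD^{-1}$) and then evaluates
$R=\dint\Gamma(X,\un x)(Y,\un y)(Z,\un z)-\dint\Gamma(Y,\un y)(X,\un x)(Z,\un z)+\Gamma\bigl((X,\un x),\Gamma(Y,\un y)(Z,\un z)\bigr)-\Gamma\bigl((Y,\un y),\Gamma(X,\un x)(Z,\un z)\bigr)$,
which is exactly your constant-vector-field formula with vanishing bracket. The paper also omits the final term-collection as ``straightforward, but lengthy,'' so your bookkeeping outline (including the correct matrix-sector check) matches its level of detail.
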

\begin{proof}
The derivative of the covariant derivative is
\begin{align*}
\dint\Gamma:&\Xi_{n}\to\Lin(\T_{n},\Lin(\T_{n}\times\T_{n},\T_{n}))\\
&(D,\un{u})\mapsto\Bigg((X,\un{x})\mapsto\bigl(((Y,\un{y}),(Z,\un{z}))\mapsto
\dint\Gamma_{(D,\un{u})}(X,\un{x})(Y,\un{y})(Z,\un{z}) \bigr)  \Bigg)
\end{align*}
where from Equation (\ref{eq:covariant derivative})
\begin{align*}
\dint\Gamma_{(D,\un{u})}(Z,\un{z})(X,\un{x})(Y,\un{y})=&\frac{1}{2}(XD^{-1}ZD^{-1}Y
  +YD^{-1}ZD^{-1}X)-\frac{\beta}{2}\Bigl(Z(\un{x}\odot\un{y})D+D(\un{x}\odot\un{y})Z\Bigr)\\
&+\frac{2\alpha\beta}{1+2n\alpha}\Bigl(\langle\un{x},Z\un{y}\rangle D
  +\langle\un{x},D\un{y}\rangle Z\Bigr)-\frac{1}{2}D^{-1}ZD^{-1}(X\un{y}+Y\un{x}).
\end{align*}
The Riemann curvature tensor is defined to be
\begin{equation*}
R:\Xi_{n}\to\Lin(\T_{n}\times\T_{n}\times\T_{n},\T_{n})\quad
(D,\un{u})\mapsto\Bigl( ((X,\un{x})(Y,\un{y})(Z,\un{z}))\mapsto
R_{(D,\un{u})}((X,\un{x}),(Y,\un{y}),(Z,\un{z}))\Bigr),
\end{equation*}
where
\begin{align*}
R_{(D,\un{u})}((X,\un{x}),(Y,\un{y}),(Z,\un{z}))=&
  \dint\Gamma_{(D,\un{u})}(X,\un{x})(Y,\un{y})(Z,\un{z})-
  \dint\Gamma_{(D,\un{u})}(Y,\un{y})(X,\un{x})(Z,\un{z})\\
  &+\Gamma_{(D,\un{u})}\bigl((X,\un{x}),\Gamma_{(D,\un{u})}(Y,\un{y})(Z,\un{z})\bigr)\hskip-2pt-
  \Gamma_{(D,\un{u})}\bigl((Y,\un{y}),\Gamma_{(D,\un{u})}(X,\un{x})(Z,\un{z})\bigr).
\end{align*}
We omit the details of the straightforward, but lengthy calculation of the curvature tensor.
\end{proof}

\begin{theorem}
For every point $(D,\un{u})\in\Xi_{n}$ and for every tangent vectors
  $(X,\un{x}),(Y,\un{y})\in\T_{n}$ the Ricci curvature tensor is
\begin{equation}
\label{eq:Ricci tensor}
\Ric_{(D,\un{u})}((X,\un{x}),(Y,\un{y}))=-\frac{n+1}{4}\Tr(D^{-1}XD^{-1}Y)
  +\frac{1}{4}\Tr(D^{-1}X)\Tr(D^{-1}Y)-\frac{\beta}{2(1+2n\alpha)}\langle\un{x},D\un{y}\rangle.
\end{equation}
\end{theorem}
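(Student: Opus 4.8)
The plan is to obtain the Ricci tensor as a contraction of the $(1,3)$ Riemann curvature tensor of the previous Theorem. Using the convention that the Ricci tensor is the trace over the first argument of the Riemann tensor with the other two held fixed, I would regard, for fixed $(X,\un{x}),(Y,\un{y})\in\T_{n}$,
\begin{equation*}
L:\T_{n}\to\T_{n}\qquad (V,\un{v})\mapsto R_{(D,\un{u})}((V,\un{v}),(X,\un{x}),(Y,\un{y}))
\end{equation*}
as a linear endomorphism, so that $\Ric_{(D,\un{u})}((X,\un{x}),(Y,\un{y}))=\operatorname{tr}L$. Because the trace of an endomorphism is basis independent, no $g$-orthonormalization is needed: it suffices to evaluate $L$ on a convenient basis and read off the diagonal coefficients. (A quick sign check using the antisymmetry of $R$ in its first two slots confirms that contracting the first slot, rather than the second, produces the correct overall sign.)

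Since $\T_{n}=\TMN\times\mathbb{R}^{n}$ is a product, I would compute $\operatorname{tr}L$ as the sum of a \emph{matrix block} and a \emph{vector block}. Diagonalizing $D=\Diag(\lambda_{1},\dots,\lambda_{n})$ exactly as in the Fisher-metric proof, the matrix block is obtained by letting $V$ range over the basis $\{E_{ii}\}\cup\{\tfrac{1}{\sqrt2}(E_{ij}+E_{ji})\}_{i<j}$ of $\TMN$, orthonormal for $\langle A,B\rangle=\Tr(AB)$, and summing the pairing of $V$ with the symmetric-matrix part of $L(V,0)$; the vector block is obtained by letting $\un{v}$ range over $(\un{e}_{k})_{k}$ and summing the $k$-th component of the $\mathbb{R}^{n}$ part of $L(0,\un{e}_{k})$. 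In a product-adapted basis the off-diagonal blocks do not contribute, so the two blocks add independently.

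For the matrix block only the four quadratic terms $\tfrac14(VD^{-1}\cdots)$ of (\ref{eq:Riemann curvature tensor}) feed the invariants $\Tr(D^{-1}XD^{-1}Y)$ and $\Tr(D^{-1}X)\Tr(D^{-1}Y)$. The computation rests on two elementary trace identities over $\TMN$, both verified in the diagonal basis:
\begin{align*}
\operatorname{tr}_{\TMN}(V\mapsto VM)&=\operatorname{tr}_{\TMN}(V\mapsto MV)=\frac{n+1}{2}\Tr M,\\
\operatorname{tr}_{\TMN}(V\mapsto XD^{-1}VD^{-1}Y)&=\frac12\bigl(\Tr(D^{-1}XD^{-1}Y)+\Tr(D^{-1}X)\Tr(D^{-1}Y)\bigr).
\end{align*}
Adding the four quadratic pieces yields $\tfrac14\bigl(-n\Tr(D^{-1}XD^{-1}Y)+\Tr(D^{-1}X)\Tr(D^{-1}Y)\bigr)$. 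The extra $-\tfrac14\Tr(D^{-1}XD^{-1}Y)$ that upgrades the coefficient to $-\tfrac{n+1}{4}$ comes from the vector block: evaluating the term $\tfrac14 D^{-1}ZD^{-1}(X\un{y}-Y\un{x})$ of (\ref{eq:Riemann curvature tensor}) on $V=(0,\un{e}_{k})$ and summing the $k$-th components produces exactly $-\tfrac14\Tr(D^{-1}XD^{-1}Y)$. The final invariant $\langle\un{x},D\un{y}\rangle$ is assembled from the $\beta$-diadic and $\tfrac{\alpha\beta}{1+2n\alpha}$ terms occurring in both blocks.

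The main obstacle is the bookkeeping, and it concentrates in two places. First, every matrix-block trace must be taken over the symmetric subspace $\TMN$ rather than the full matrix algebra; this is precisely what replaces the naive factor $n$ by $\tfrac{n+1}{2}$ in the first identity above and is responsible for the shift from $-\tfrac n4$ to $-\tfrac{n+1}{4}$. Second, the coefficient of $\langle\un{x},D\un{y}\rangle$ collects four distinct contributions (the $\beta$-diadic and $\alpha\beta$ terms of the matrix block together with those of the vector block), whose partial cancellations must reproduce the factor $\tfrac{1}{1+2n\alpha}$ exactly. A reliable check at every stage is that $\operatorname{tr}L$ must be symmetric under $(X,\un{x})\leftrightarrow(Y,\un{y})$, as the stated formula is; any asymmetry would flag a dropped or mis-signed term in the contraction.
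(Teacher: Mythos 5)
Your proposal is correct and follows essentially the same route as the paper: the Ricci tensor is obtained by tracing the first slot of the Riemann curvature tensor of the preceding Theorem over the product-adapted basis $\left\lbrace E_{ii}\right\rbrace\cup\left\lbrace E_{ij}+E_{ji}\right\rbrace_{i<j}\cup\left\lbrace \un{e}_{k}\right\rbrace$, with the matrix block and the vector block contributing independently. The only cosmetic differences are that you compute the bilinear form directly where the paper computes the diagonal elements and polarizes, and that you package the matrix-block computation into two reusable trace identities over $\TMN$ which the paper instead evaluates term by term as $\rho_{1},\dots,\rho_{6}$; your identities and intermediate coefficients (including the $-\tfrac{n}{4}$ to $-\tfrac{n+1}{4}$ upgrade from the vector block and the assembly of the factor $\tfrac{1}{1+2n\alpha}$) are consistent with the stated formula.
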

\begin{proof}
At a point $(D,\un{u})\in\Xi_{n}$ for given tangent vectors $(X,\un{x}),(Y,\un{y})\in\T_{n}$
  the map
\begin{equation*}
R_{(D,\un{u})}(\cdot,(X,\un{x}),(Y,\un{y})):\T_{n}\to\T_{n}
  \quad (Z,\un{z})\mapsto R_{(D,\un{u})}((Z,\un{z}),(X,\un{x}),(Y,\un{y}))
\end{equation*}
is linear, and its trace is the Ricci tensor
\begin{equation*}
\Ric:\Xi_{n}\to\Lin(\T_{n}\times\T_{n},\mathbb{R})\quad
  (D,\un{u})\mapsto\Bigl(((X,\un{x}),(Y,\un{y}))\mapsto
  \Ric_{(D,\un{u})}((X,\un{x}),(Y,\un{y})) \Bigr),
\end{equation*}
where
\begin{equation*}
\Ric_{(D,\un{u})}((X,\un{x}),(Y,\un{y}))=\Tr R_{(D,\un{u})}(\cdot,(X,\un{x}),(Y,\un{y})).
\end{equation*}
The elements $\Ric_{(D,\un{u})}((X,\un{x}),(X,\un{x}))$ determines the Ricci tensor.
For the further calculation we fix the tangent vector $(X,\un{x})\in\T_{n}$.
According to the Equation (\ref{eq:Riemann curvature tensor}) the Riemann curvature tensor
  consists of six summands.
We compute the trace of the summands separately.
Let us denote by $E_{ij}$ the usual system of $n\times n$ matrix unit and define
\begin{equation*}
F_{ij}=E_{ij}+E_{ji}
\end{equation*}
  for indices $1\leq i<j\leq n$.
To compute the trace we choose the basis
\begin{equation}
\label{eq:basis}
\left\lbrace E_{ii}\right\rbrace_{i=1,\dots,n}\bigcup
   \left\lbrace F_{ij}\right\rbrace_{1\leq i<j\leq n}\bigcup
   \left\lbrace e_{i}\right\rbrace_{i=1,\dots,n}
\end{equation}
  in $\T_{n}$, where $(e_{i})_{i=1,\dots,n}$ is the canonical basis in $\mathbb{R}^{n}$.
The trace of the first summand is
\begin{align*}
\rho_{1}=&\frac{1}{2}\sum_{i=1}^{n}\Tr\left(XD^{-1}E_{ii}D^{-1}XE_{ii}
           -E_{ii}D^{-1}XD^{-1}XE_{ii}\right)\\
         &+\frac{1}{4}\sum_{1\leq i<j\leq n}
           \Tr\left(XD^{-1}F_{ij}D^{-1}XF_{ij}-F_{ij}D^{-1}XD^{-1}XF_{ij}\right)
\end{align*}
  which is
\begin{equation*}
\rho_{1}=-\frac{n+1}{4}\Tr D^{-1}XD^{-1}X+\frac{1}{4}\Tr X^{2}D^{-2}+\frac{1}{4}\Tr^{2}D^{-1}X.
\end{equation*}
The trace of the second summand is
\begin{equation*}
\rho_{2}=-\frac{\beta}{4}\sum_{i=1}^{n}\Tr(E_{ii}(E_{ii}\un{x}\odot D\un{x}))
  -\frac{\beta}{8}\sum_{1\leq i<j\leq n}\Tr F_{ij}(\un{x}\odot D\un{x})F_{ij}
  =-\beta\frac{n+1}{4}\langle\un{x},D\un{x}\rangle.
\end{equation*}
The third summand gives
\begin{equation*}
\rho_{3}=\frac{\alpha\beta}{1+2n\alpha}\sum_{i=1}^{n}
    \Tr\langle E_{ii}\un{x},\un{x}\rangle DE_{ii}
  +\frac{\alpha\beta}{1+2n\alpha}\sum_{1\leq i<j\leq n}
    \Tr\langle F_{ij}\un{x},\un{x}\rangle DF_{ij}
  =\frac{\alpha\beta}{1+2n\alpha}\langle\un{x},D\un{x}\rangle.
\end{equation*}
The trace of the forth, fifth and sixth summand is
\begin{align*}
\rho_{4}&=\frac{1}{4}\sum_{i=1}^{n}\langle e_{i},-D^{-1}XD^{-1}Xe_{i}\rangle
 =-\frac{1}{4}\Tr D^{-1}XD^{-1}X \\
\rho_{5}&=\frac{\beta}{4}\sum_{i=1}^{n}
  \langle e_{i},(\un{x}\odot e_{i})D\un{x}-(\un{x}\odot\un{x})De_{i}\rangle
  =\beta\frac{n-1}{4}\langle\un{x},D\un{x}\rangle\\
\rho_{6}&=\frac{\alpha\beta}{1+2n\alpha}\sum_{i=1}^{n}
  \langle e_{i},\langle\un{x},D\un{x}\rangle e_{i}-\langle e_{i},D\un{x}\rangle\un{x}\rangle
  =\frac{\alpha\beta(n-1)}{1+2n\alpha}\langle\un{x},D\un{x}\rangle.
\end{align*}
Adding the traces we have the diagonal element of the Ricci tensor
\begin{equation*}
\Ric_{(D,\un{u})}((X,\un{x}),(X,\un{x}))=\sum_{i=1}^{6}\rho_{i}
 =-\frac{n+1}{4}\Tr D^{-1}XD^{-1}X+\frac{1}{4}\Tr^{2}D^{-1}X
 -\frac{\beta}{1+2n\alpha}\langle\un{x},D\un{x}\rangle.
\end{equation*}
Using the polarization formula
\begin{equation*}
\Ric_{(D,\un{u})}((X,\un{x}),(Y,\un{y}))\hskip-2pt=\hskip-4pt\frac{1}{4}\Bigl(\hskip-1pt
  \Ric_{(D,\un{u})}((X+Y,\un{x}+\un{y}),(X+Y,\un{x}+\un{y}))\hskip-2pt-
  \Ric_{(D,\un{u})}((X-Y,\un{x}-\un{y}),(X-Y,\un{x}-\un{y}))\hskip-1pt\Bigr)
\end{equation*}
we get Equation (\ref{eq:Ricci tensor}).
\end{proof}

The next Theorem shows that the manifolds $\Xi_{n}$ and $\Xi_{n}^{(s)}$ has constant scalar
  curvature.

\begin{theorem}
For every point $D\in\Xi_{n}^{(s)}$ the scalar curvature of the space of special normal
  distributions is
\begin{equation}
\Scal_{s}(D)=-\frac{n\Bigl(2(n-1)(n+1)(n+2)\alpha+n^2+2n-1 \Bigr)}{4(1+2n\alpha)}
\end{equation}
  and for every point $(D,\un{u})\in\Xi_{n}$ the space of normal distributions is
\begin{equation}
\Scal(D,\un{u})=-\frac{n(n+1)\Bigl(2(n+2)(n-1)\alpha+n+1 \Bigr)}{4(1+2n\alpha)}.
\end{equation}
\end{theorem}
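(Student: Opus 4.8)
The plan is to obtain the scalar curvature as the metric trace of the Ricci tensor computed in the previous Theorem. If $\{v_a\}$ is a basis of $\T_n$ (respectively of the matrix directions $\TMN$ in the special case) and $g^{ab}$ denotes the inverse of the Gram matrix $g(v_a,v_b)$, then $\Scal=\sum_{a,b}g^{ab}\Ric(v_a,v_b)$ with $\Ric$ given by Equation (\ref{eq:Ricci tensor}). Since nothing in $g$ or $\Ric$ depends on $\un u$, and the congruences $D\mapsto ADA^{T}$ act transitively on $\Xi_n^{(s)}$ (so that $\Xi_n$ is homogeneous under congruence together with translation of $\un u$), the scalar curvature is constant; I would therefore evaluate the whole contraction at the base point $D=E$, $\un u=\un 0$, which is precisely why the stated expressions are independent of $(D,\un u)$.

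At this base point the two forms collapse to $g((X,\un x),(Y,\un y))=\frac12\Tr(XY)+\alpha(\Tr X)(\Tr Y)+\beta\langle\un x,\un y\rangle$ and $\Ric((X,\un x),(Y,\un y))=-\frac{n+1}4\Tr(XY)+\frac14(\Tr X)(\Tr Y)-\frac{\beta}{2(1+2n\alpha)}\langle\un x,\un y\rangle$. Both are block diagonal for the splitting $\T_n=\TMN\times\mathbb{R}^n$ and, within $\TMN$, for the splitting into off-diagonal and diagonal symmetric matrices. Using the basis $\{F_{ij}\}_{i<j}\cup\{E_{ii}\}_i\cup\{e_i\}_i$ already fixed in the proof of the preceding Theorem, I would treat these three blocks independently.

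Two of the blocks are immediate. Each off-diagonal generator satisfies $g(F_{ij},F_{ij})=1$ and $\Ric(F_{ij},F_{ij})=-\frac{n+1}2$, so the $\binom n2$ off-diagonal directions contribute $-\frac{n(n-1)(n+1)}4$; on the vector block $g=\beta E$ and $\Ric=-\frac{\beta}{2(1+2n\alpha)}E$, contributing $-\frac{n}{2(1+2n\alpha)}$. The diagonal block is the only delicate step, because the $\alpha$-term prevents the $E_{ii}$ from being $g$-orthonormal: here $g$ and $\Ric$ restrict to $\frac12 E+\alpha(\un 1\circ\un 1)$ and $-\frac{n+1}4 E+\frac14(\un 1\circ\un 1)$, where $\un 1=(1,\dots,1)$. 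I would invert the metric by the Sherman--Morrison identity $\bigl(\frac12 E+\alpha(\un 1\circ\un 1)\bigr)^{-1}=2E-\frac{4\alpha}{1+2n\alpha}(\un 1\circ\un 1)$, multiply by the diagonal Ricci block, and take the trace, using $\un 1^{T}\un 1=n$ so that the rank-one corrections cancel down to a single rational term.

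Adding the two matrix contributions gives $\Scal_s$, and adding the vector block as well gives the full $\Scal$. I expect the main obstacle to be the diagonal block: carrying $\un 1^{T}\un 1=n$ correctly through the Sherman--Morrison inversion and the subsequent trace, and then repackaging the intermediate result $-\frac{n(n^2+2n-1)}4+\frac{n\alpha}{1+2n\alpha}$ into the factored form $-\frac{n\bigl(2(n-1)(n+1)(n+2)\alpha+n^2+2n-1\bigr)}{4(1+2n\alpha)}$ stated for $\Scal_s$, together with the analogous rearrangement for $\Scal$.
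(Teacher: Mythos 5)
Your proposal is correct --- every number in it checks out: the off-diagonal block contributes $-\frac{n(n-1)(n+1)}{4}$, the Sherman--Morrison contraction of the diagonal block gives $-\frac{n^{2}}{2}+\frac{n\alpha}{1+2n\alpha}$, the vector block gives $-\frac{n}{2(1+2n\alpha)}$, and these sums do repackage into the two stated formulas. The underlying strategy (contract the Ricci tensor of the preceding Theorem with the inverse metric in the basis $\{E_{ii}\}\cup\{F_{ij}\}\cup\{e_{i}\}$) is the same as the paper's, but the execution is genuinely different. The paper performs the index raising at an arbitrary point, by exhibiting the Ricci endomorphism
$\widetilde{\Ric}_{(D,\un{u})}(X,\un{x})=-\frac{n+1}{2}X+\frac{1+2(n+1)\alpha}{2(1+2n\alpha)}D\Tr(D^{-1}X)-\frac{1}{2(1+2n\alpha)}\un{x}$
and then taking the ordinary, metric-independent trace of this linear map; no Gram matrix is inverted and constancy of the curvature is an output rather than an input. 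You instead reduce to the single point $D=E$ by a symmetry argument and invert the Gram matrix there explicitly. That buys a more elementary, purely finite-dimensional linear-algebra computation, but it rests on one assertion you should make explicit: transitivity of the congruence action is not enough, you also need that $(D,\un{u})\mapsto(ADA^{T},A^{-T}\un{u}+\un{b})$, with tangent action $(X,\un{x})\mapsto(AXA^{T},A^{-T}\un{x})$, is an isometry of $g$ (a one-line check, since $\Tr(D^{-1}XD^{-1}Y)$, $\Tr(D^{-1}X)$ and $\langle\un{x},D\un{y}\rangle$ are each invariant under exactly this pairing of actions); without that, evaluation at $D=E$ says nothing about other points. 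As a side benefit, your intermediate value $-\frac{n(n^{2}+2n-1)}{4}+\frac{n\alpha}{1+2n\alpha}$ for $\Scal_{s}$ provides an independent confirmation of the Theorem: the paper's printed $\rho_{2}'=n\frac{1+2(n+1)\alpha}{4(1+2n\alpha)}$ contains a factor-of-two slip (the trace of the rank-one map $X\mapsto D\Tr(D^{-1}X)$ equals $n$, so in fact $\rho_{2}'=n\frac{1+2(n+1)\alpha}{2(1+2n\alpha)}$), and it is this corrected value, consistent with your computation, that yields the stated formulas.
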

\begin{proof}
At a point $(D,\un{u})\in\Xi_{n}$ for given tangent vector $(X,\un{x})\in T_{n}$ the map
\begin{equation*}
\T_{n}\to\mathbb{R}\quad (Y,\un{y})\mapsto \Ric_{(D,\un{u})}((X,\un{x}),(Y,\un{y}))
\end{equation*}
  defines a linear functional.
So there exists a unique $(\Tilde{X},\tilde{\un{x}})\in\T_{n}$ tangent vector, such that
\begin{equation*}
g_{(D,\un{u})}((\Tilde{X},\tilde{\un{x}}),(Y,\un{y}))=\Ric_{(D,\un{u})}((X,\un{x}),(Y,\un{y}))
\end{equation*}
  holds for every tangent vector $(Y,\un{y})\in\T_{n}$.
Let us define the map
\begin{equation*}
\widetilde{\Ric}:\Xi_{n}\to\Lin(\T_{n},\T_{n})\quad
 (D,\un{u})\mapsto\Bigl((X,\un{x})\mapsto (\Tilde{X},\tilde{\un{x}})\Bigr).
\end{equation*}
The explicit expression
\begin{equation}
\label{eq:Ricci v tensor}
\widetilde{\Ric}_{(D,\un{u})}(X,\un{x})=-\frac{n+1}{2}X
  +\frac{1+2(n+1)\alpha}{2(1+2n\alpha)}D\Tr(D^{-1}X)-\frac{1}{2(1+2n\alpha)}\un{x}
\end{equation}
  can be easily verified.
The scalar curvature of the manifold is the trace of the map $\widetilde{\Ric}$
\begin{equation*}
\Scal:\Xi_{n}\to\mathbb{R}\quad (D,\un{u})\mapsto \Tr \widetilde{\Ric}_{D,\un{u}}.
\end{equation*}
Using the basis (\ref{eq:basis}) the trace of the three summand in
  the Equation (\ref{eq:Ricci v tensor}) is
\begin{align*}
\rho'_{1}&=-\frac{n+1}{2}\sum_{i=1}^{n}\Tr(E_{ii}E_{ii})
          -\frac{n+1}{4}\sum_{1\leq i<j\leq n}\Tr(F_{ij}F_{ij})=-\frac{n(n+1)^{2}}{4} \\
\rho'_{2}&=\frac{1+2(n+1)\alpha}{2(1+2n\alpha)}\sum_{i=1}^{n}\Tr(E_{ii}D)\Tr(E_{ii}D^{-1})
          +\frac{1+2(n+1)\alpha}{4(1+2n\alpha)}\sum_{1\leq i<j\leq n}
            \Tr(F_{ij}D)\Tr(F_{ij}D^{-1})\\
         &=n\frac{1+2(n+1)\alpha}{4(1+2n\alpha)}\\
\rho'_{3}&=-\frac{1}{2(1+2n\alpha)}\sum_{i=1}^{n}\langle e_{i},e_{i}\rangle
  =-\frac{n}{2(1+2n\alpha)}.
\end{align*}
The scalar curvature of the manifold $\Xi_{n}$ at a point $(D,\un{u})\in\Xi_{n}$ is
\begin{equation*}
\Scal(D,\un{u})=\rho'_{1}+\rho'_{2}+\rho'_{3}
\end{equation*}
  and the scalar curvature of the space of special normal distributions at a point
  $D\in\Xi_{n}^{(s)}$ is
\begin{equation*}
\Scal_{s}(D,\un{u})=\rho'_{1}+\rho'_{2}.
\end{equation*}
\end{proof}

\section{Conclusion}

Finally we have some remarks about the geometry of the generalized Gaussian distributions.

\begin{remark}
For every pair $(n,p)\in\mathcal{N}$, where $p<2$ the scalar curvature of the space of extended Gaussian
  distribution endowed with the Fisher information metric at every point is
\begin{equation*}
\Scal=-\frac{n(n+1)(2-p)}{4(2+n(p-1))} ((n+2)(n-1)(p-1)+2(n+1)).
\end{equation*}
We note that the parameter $p$ is in the interval $\left\rbrack \frac{n}{n+2},2\right\lbrack$,
  it means that the scalar curvature is a monotonously increasing function of $p$.
The scalar curvature at a given point is connected to the statistical distinguishability of the
  point from it's neighborhood, since the first nonconstant term in the Taylor expansion of the
  volume of the geodesic ball is the scalar curvature
\begin{equation*}
V_{n}(r)=\frac{r^{n}\pi^{n/2}}{\Gamma\left(\frac{n}{2}+1\right)}
  \left(1-\frac{\Scal}{6(n+2)}r^{2}+O(r^{4}) \right).
\end{equation*}
This idea is widely used in quantum information geometry and in that framework it is due to
  Petz \cite{Pet1}.
In this classical setting this means, that the parameter $p$ modulates the statistical properties
  of this manifold.
Namely, in the $p\to 2$ limit the manifold is more homogenous and it is more difficult to
  distinguish close points in the $p\to\frac{n}{n+2}$ limit; it is easier to decide whether two
  points are identical or just close to each other.
This can have relevance in hypothesis testing.
\end{remark}

\begin{remark}
Consider the space of special normal distributions and the Fisher information matrix
\begin{equation*}
g_{D}(X,Y)=\Tr\bigl(D^{-1}XD^{-1}Y\bigr).
\end{equation*}
Surprisingly from this well-known classical metric one can easily recover some metrics which are
  frequently used in quantum information theory.
In quantum setting just the trace one matrices of $\Xi_{n}^{(s)}$ are considered.
For example the Riemannian metrics
\begin{align*}
g^{(\KM)}_{D}(X,Y)&=\int_{0}^{\infty}g_{D+tE,\un{u}}(X,Y)\ \dint t\\
g^{(\LA)}_{D}(X,Y)&=g_{D}(D^{1/2}X,YD^{1/2})
\end{align*}
  are very important ones in quantum setting, they are called Kubo--Mori \cite{FicSau,Pet2}
  metric and largest metric.
This kind of differential geometrical connections can help to understand and to interpret
  the geometrical invariants of the quantum information manifolds.
\end{remark}

{\bf Acknowledgement.}
This work was supported by Japan Society for the Promotion of Science, contract number
  P 06917.
\bigskip

\end{document}